\def\@settitle{\begin{center}
		\baselineskip14\p@\relax
		\normalfont\LARGE\bfseries
		\@title
		\ifx\@subtitle\@empty\else
		\\[1ex] 
		
		\normalsize\mdseries\@subtitle
		\fi
		\ifx\@didication\@empty\else
		\\[2ex] 
		
		\large\mdseries\it\@dedication
		\fi
	\end{center}
}
\def\subtitle#1{\gdef\@subtitle{#1}}
\def\@subtitle{}
\def\dedication#1{\gdef\@dedication{#1}}
\def\@dedication{}
\renewcommand{\section}{\@startsection
	{section}{1}{0mm}{5mm}{2mm}{\raggedright\bfseries}}
\newcommand{\zerodisplayskips}{ 
	\setlength{\abovedisplayskip}{0pt} 
	\setlength{\belowdisplayskip}{0pt} 
	\setlength{\abovedisplayshortskip}{0pt} 
	\setlength{\belowdisplayshortskip}{0pt}}
\appto{\normalsize}{\zerodisplayskips}
\appto{\small}{\zerodisplayskips}
\appto{\footnotesize}{\zerodisplayskips}
\newcommand\HUGE{\@setfontsize\Huge{80}{80}}
\newcolumntype{P}[1]{>{\centering\arraybackslash}p{#1}}
\newcolumntype{M}[1]{>{\centering\arraybackslash}m{#1}}
\tikzset{ 
roundnode/.style={draw,shape=circle, fill, inner sep=1pt},
smallround/.style={draw,shape=circle, fill, inner sep=0.5pt},
pointnode/.style={draw,shape=circle,inner sep=0pt}
}
\DeclareMathOperator{\subdiv}{Subdiv}
\DeclareMathOperator{\Supp}{Supp}
\DeclareMathOperator{\trop}{trop}
\DeclareMathOperator{\val}{val}
\def\al{{\epsilon}}
\def\bal{{\bar\epsilon}}
\newtheorem{theorem}{Theorem}[section] 
\newtheorem{Theorem}[theorem]{Theorem}
\newtheorem{Lemma}[theorem]{Lemma}
\newtheorem{Corollary}[theorem]{Corollary}
\newtheorem{Proposition}[theorem]{Proposition}
\theoremstyle{definition}
\newtheorem{Definition}[theorem]{Definition}
\newtheorem{Example}[theorem]{Example}
\newtheorem{Remark}[theorem]{Remark}
\title{Note on smoothness condition on tropical elliptic curves	of symmetric truncated cubic forms}
\author{Rani Sasmita Tarmidi}
\address{Rani Sasmita Tarmidi:
	Department of Mathematics, 
	Graduate School of Science, 
	Osaka University, 
	Toyonaka, Osaka 560-0043, Japan}
\email{u644627d@ecs.osaka-u.ac.jp,
	ranitarmidi@yahoo.com
}
\date{}
\begin{document}
\begin{abstract}
In this work, we provide explicit conditions for the coefficients of a symmetric truncated cubic to give a smooth tropical curve. We also examine non-smooth cases corresponding to some specific subdivision types.
\end{abstract}
\maketitle
\markboth{R.S.Tarmidi}
{Note on smoothness condition on tropical elliptic curves	of symmetric truncated cubic forms}
\tableofcontents
\section{Introduction}
Let $\mathbb{K}$ be a field with a valuation $\val: \mathbb{K} \to \mathbb{Q} \cup \{\infty\}$. In this paper, we work with a symmetric truncated cubic
\begin{equation}\label{eq-our-truncated-f}
f(x,y) = c_{12} (xy^2 + x^2y) + c_{34} (x^2 + y^2) + c_5 xy + c_{67} (x + y) + c_8 \in \mathbb{K}[x,y]
\end{equation}
and determine when its associated tropical polynomial
\begin{align}\label{eq-our-truncated-trop-f}
\begin{split}
\trop(f)(X, Y) = \min (& v_{12} + X + 2Y, v_{12} + 2X + Y, v_{34} + 2X, v_{34} + 2Y, v_5 + X + Y, \\
& v_{67} + X, v_{67} + Y, v_8),
\end{split}
\end{align}
where $v_{k} = \val(c_{k})$ for $k \in \{12, 34, 5, 67, 8\}$, has a smooth tropical curve. The tropical curve $C(\trop(f))$ of tropical polynomial $\trop(f): \mathbb{R}^2 \to \mathbb{R}$ is the collection of its singular loci. The structure of a tropical curve is determined by the subdivision of the Newton polygon of $\trop(f)$. The smooth tropical curves are dual to unimodular subdivisions. Since the Newton polygon of $\trop(f)$ is symmetric around $y=x$ and truncated, there are five unimodular subdivisions.
\begin{theorem}\label{thm-smooth}
Let $f$ be the symmetric truncated cubic in \textup{(\ref{eq-our-truncated-f})}. Then the possible cycles appearing in the tropical curve of $\trop(f)$ are triangles, squares, pentagons, hexagons and heptagons. Each of these cycles occurs if and only if for $k \in \{12, 34, 5, 67, 8\}$, the coefficients $v_{k}$ satisfy inequalities listed in \textup{Table \ref{characterization_table}}.
\begin{table}[ht]
\centering
\begin{tabular}{rlr}
 & Cycle shape & Conditions of $v_{k}$ \\ \hline
\multirow{3}{3 em}{ (a) } & \multirow{3}{8 em}{ Triangle } & $-v_{34} + 2v_{67} - v_8 < 0$  \\ 
&& $v_{12} - v_5 - v_{67} + v_8 < 0$ \\
&& $-2v_{12} + 3v_5 - v_8 < 0$ \\ \hline 
\multirow{3}{3 em}{ (b) } & \multirow{3}{8 em}{ Square } & $-v_5 + 2 v_{67} - v_8 < 0$ \\ 
&& $-v_{12} + 2 v_5 - v_{67} < 0$\\
&& $v_{12} - v_{34} - v_5 + v_{67} < 0$ \\ \hline 
\multirow{3}{3 em}{ (c) } & \multirow{3}{8 em}{ Pentagon } & $v_5 - 2 v_{67} + v_8 < 0$ \\ 
&& $-v_{12} + v_5 + v_{67}- v_8 < 0$\\
&& $v_{12} - v_{34} - v_5 + v_{67} < 0$ \\ \hline 
\multirow{3}{3 em}{ (d) } & \multirow{3}{8 em}{ Hexagon } & $-v_5 + 2 v_{67} - v_8 < 0$ \\ 
&& $-v_{34} + v_5 < 0$\\
&& $-v_{12} + v_{34} + v_5 - v_{67} < 0$ \\ \hline 
\multirow{3}{3 em}{ (e) } & \multirow{3}{8 em}{ Heptagon } & $v_5 -2 v_{67}+ v_8 < 0$ \\ 
&& $-v_{34} + 2 v_{67} - v_8 < 0$ \\
&& $-v_{12} + v_{34} + v_5 - v_{67} < 0$ \\ \hline 
\end{tabular}\caption{Conditions of $v_{12}, v_{34}, v_5, v_{67}, v_8$ for all smooth tropical curves of $\trop(f)$.}
\label{characterization_table}
\end{table}
\end{theorem}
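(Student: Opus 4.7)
The plan is to reduce Theorem~\ref{thm-smooth} to a case analysis of the five $y=x$-symmetric unimodular subdivisions of the Newton polygon of $\trop(f)$. The Newton polygon $\Delta$ is the pentagon with vertices $(0,0)$, $(2,0)$, $(2,1)$, $(1,2)$, $(0,2)$ and contains exactly eight lattice points, with $(1,1)$ as its unique interior point. By Pick's theorem $\Delta$ has area $7/2$, so every unimodular subdivision consists of seven triangles of area $1/2$. The bounded region of $C(\trop(f))$ is dual to $(1,1)$, and the number of its sides equals the number of triangles in the star of $(1,1)$. Hence the cycle shapes triangle, square, pentagon, hexagon, heptagon correspond to the star containing $3,4,5,6,7$ triangles respectively.

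First I would enumerate the five unimodular subdivisions explicitly. The maximal case is the ``heptagon'' subdivision, in which $(1,1)$ is joined to every boundary lattice point. The other four arise by successive symmetric diagonal flips: flipping the unit square $(0,0)(1,0)(1,1)(0,1)$ to the $(1,0)$--$(0,1)$ diagonal removes $(0,0)$ from the star (hexagon); flipping the pair of mirror parallelograms $(1,0)(2,0)(2,1)(1,1)$ and $(0,1)(1,1)(1,2)(0,2)$ removes $(2,0)$ and $(0,2)$ (pentagon); combining both kinds of flip yields the square; and a further pair of symmetric flips of the parallelograms $(0,0)(1,0)(2,1)(1,1)$ and $(0,0)(0,1)(1,2)(1,1)$ from the pentagon removes $(1,0)$ and $(0,1)$ from the star to yield the triangle. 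A direct check confirms that these are the only five unimodular subdivisions compatible with the $y=x$ symmetry of $f$.

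Next I would translate each subdivision into explicit inequalities on $v_{12}, v_{34}, v_5, v_{67}, v_8$. Two elementary criteria suffice: \emph{(flip)} for a lattice parallelogram $p_1p_2p_3p_4$ with opposite pairs $(p_1,p_3), (p_2,p_4)$ and lifted heights $h_i$, the diagonal $p_1p_3$ lies in the regular subdivision if and only if $h_1+h_3<h_2+h_4$; \emph{(visibility)} a lattice point $q$ interior to a segment or triangle with vertices $p_1,\ldots,p_m$ is a vertex of the lower hull if and only if $v_q$ is strictly less than the affine function through the $(p_i,v_{p_i})$ evaluated at $q$. Applied case by case, each of the three inequalities in Table~\ref{characterization_table} corresponds to one such condition. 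For instance, in case (a) the three inequalities express, respectively: that $(1,0)$ lies below the segment $(0,0)(2,0)$ (and by symmetry $(0,1)$ below $(0,0)(0,2)$); the flip of the parallelogram $(0,0)(1,0)(2,1)(1,1)$ realizing the diagonal $(0,0)(2,1)$; and the visibility of $(1,1)$ as a vertex strictly below the plane through $(0,0)$, $(2,1)$, $(1,2)$.

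Finally, to prove the ``if and only if'' I would verify for each case that the three listed inequalities imply all remaining flip and visibility conditions for the relevant subdivision. The ``only if'' direction follows directly from the flip correspondence. The ``if'' direction requires showing that every other regular-subdivision inequality is a non-negative linear combination of the three listed ones (together with their $y=x$-symmetric counterparts, which coincide with the originals since the $v$-values are symmetric). This is a finite linear-algebra check, and I expect it to be the main technical obstacle: one must repeat it carefully for all five cases (a)--(e), in each case identifying the minimal generating set of inequalities of the secondary-fan cone associated with that triangulation and confirming that all the ``non-critical'' visibility conditions at the six remaining boundary points follow from the three listed walls.
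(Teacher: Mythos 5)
Your proposal is correct and follows essentially the same route as the paper: enumerate the five $y=x$-symmetric unimodular subdivisions of $\Delta_f$, write down the defining inequalities of the secondary cone of each, and reduce them to the three listed walls (the paper derives the redundant system by requiring, at each dual vertex, that the cell's terms realize the strict minimum of $\trop(f)$, whereas you read off equivalent flip/visibility conditions on the lifted polytope — a cosmetic difference). The deferred "finite linear-algebra check" is exactly the reduction the paper carries out for the hexagon case (eight inequalities collapsing to three), so your plan is sound as stated.
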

Meanwhile, the non-smooth tropical curves of $\trop(f)$ are the duals of non-unimodular subdivisions. We also investigate possible non-unimodular subdivisions and provide the conditions of $(v_{12}, v_{34}, v_5, v_{67}, v_8)$ for some selected subdivisions. See Theorem \ref{thm-non-smooth} below. The above results will be applied in another paper \cite{nak23} that studies the tropicalization of a certain two-parameter family of Edwards elliptic curves closely. See $\S$\ref{sect-edwards} for more details. \\
\indent The contents of this paper are organized as follows. In Section \ref{sect-prelim}, we provide a necessary overview of the general definitions pertaining to tropical curves. Moving on to Section \ref{sect-smooth}, we present the characterization of smooth tropical curves of our symmetric truncated cubic. In Section \ref{sect-non-smth}, we delve into a discussion on the non-smooth tropical curves associated with our cubic. Lastly, in Section \ref{sect-app}, we showcase the utilization of an integral unimodular transformation on $f$ and demonstrate the practical applications of our main findings.
\section{Preliminaries on tropical curves}\label{sect-prelim}
Let $\mathbb{K}$ be a field with a valuation $\val: \mathbb{K} \to \mathbb{Q} \cup \{\infty\}$. For a Laurent polynomial
$$
f(x,y) = \sum a_{ij}x^i y^j \in \mathbb{K}[x^{\pm 1}, y^{\pm 1}],
$$
we use the following definitions \cite{sturm}.
\begin{Definition}
The tropicalization of polynomial $f(x,y)$ is obtained by replacing each coefficient with its valuation and altering the operations $(\cdot, +)$ with operations $(+, \min)$. It is written as
$$
\trop(f)(X,Y) = \min (\val(a_{ij}) + i \cdot X + j \cdot Y).
$$
The tropical curve of $\trop(f)$ that is denoted by $C(\trop(f))$, is the collection of coordinates $(X,Y) \in \mathbb{R}^2$ where $\trop(f)$ is not differentiable. It forms a collection of vertices, bounded edges, and rays.
\end{Definition}
\begin{Remark}\label{max-min}
It is common to find other sources in literature that express the tropicalization of a polynomial by using operations $(+, \max)$. The tropical curve of the tropical polynomial in the form
$$
\trop(f^\prime) = \max(-\val(a_{ij}) + i \cdot X + j \cdot Y)
$$
and $C(\trop(f))$ are point-symmetry with respect to the origin $O$.
\end{Remark}
\begin{proof}
Let $(X,Y)$ be a point on $C(\trop(f))$. Then there exist $i_1j_1$ and $i_2j_2$ such that
$$
\val(a_{i_1j_1}) + i_1X + j_1Y = \val(a_{i_2j_2}) + i_2X + j_2Y
$$
and less than other terms $\val(a_{ij}) + iX + jY$. Thus we have 
$$
-\val(a_{i_1j_1}) + i_1(-X) + j_1(-Y) = -\val(a_{i_2j_2}) + i_2(-X) + j_2(-Y)
$$
and greater than other terms of $-\val(a_{ij}) + i(-X) + j(-Y)$. In other words, $(X,Y)$ is a point on $C(\trop(f))$ if and only if $(-X, -Y)$ is a point on $C(\trop(f^\prime))$. Thus, the tropical curves are point-symmetry with respect to the origin $O$.
\end{proof}
To determine the conditions of $\val(a_{ij})$ for a specific tropical curve, we will use its relationship with the subdivision of the following Newton polygon of $\trop(f)$. 
\begin{Definition}
The set
$$
\Supp_f = \{(i,j) \in \mathbb{Z}^2: a_{ij} \neq 0\}
$$
denotes the support of tropical polynomial $\trop(f)$. The Newton polygon of $\trop(f)$, that is denoted by $\Delta_f$, is the convex hull of $\Supp_f$.
\end{Definition}
The subdivision of $\Delta_f$ plays a crucial role in understanding the structure of the tropical curves. The definition of a regular subdivision depends on the valuations of non-zero coefficients $a_{ij}$ in the following manner.
\begin{Definition}
Let $v = (\val(a_{ij}) \vert a_{ij} \neq 0) \in \mathbb{R}^{\Supp_f}$. Furthermore, let $\Delta_f$ be the Newton polygon of $\trop(f)$ and $\overline{\Delta_f}$ be the convex hull of 
$$
\{ (i,j,\val(a_{ij})) \mid (i,j) \in \Supp_f \}  
\subseteq \mathbb{Z}^2 \times \mathbb{R}.
$$
The regular subdivision $\subdiv_v$ is the image of the corner edges of the upper part of $\overline{\Delta_f}$ under the projection to $\mathbb{Z}^2$ that subdivide $\Delta_f$ into smaller polygons. \\
\indent Each smaller polygon is called a cell. A cell is primitive when all of its lattice points are its vertices. It is unimodular if it is a triangle of area half. A subdivision is primitive (resp. unimodular) when all cells are primitive (resp. unimodular). \\
\indent The collection of vectors $v$ that yield the same regular subdivision forms a polyhedral cone in $\mathbb{R}^{\Supp_f}$. The collection of these cones defines the secondary fan of the Newton polygon $\Delta_f$.
\end{Definition}
Unimodular subdivisions are also the finest subdivisions. Thus, they correspond to the top-dimensional cones of the secondary fan. We observe that a unimodular cell or subdivision is always primitive, but the converse does not hold in general. Furthermore, the coarsest subdivision is the Newton polygon itself. The tropical curve $C(\trop(f))$ is dual to $\subdiv_v$ (cf. \cite{kmm09,vig09}). Therefore, there is a one-to-one correspondence between the edges of a regular subdivision and the edges of a tropical curve. This relation is our main tool to analyze the structure of tropical curves of $\trop(f)$.
\section{Smooth tropical curves characterization}\label{sect-smooth}
As mentioned earlier, let $f$ be the truncated cubic polynomial
\begin{align}\label{eq-our-trunc-f-sect3}
f(x,y) = c_{12} (xy^2 + x^2y) + c_{34} (x^2 + y^2) + c_5 xy + c_{67} (x + y) + c_8.
\end{align}
Its tropicalization is the piece-wise linear function 
\begin{align}\label{eq-our-trop-f-sect3}
\begin{split}
\trop(f)(X, Y) = \min (& v_{12} + X + 2Y, v_{12} + 2X + Y, v_{34} + 2X, v_{34} + 2Y, v_5 + X + Y, \\
& v_{67} + X, v_{67} + Y, v_8)
\end{split}
\end{align}
where $v_{k} = \val(c_{k})$ for $k \in \{12,34,5,67,8\}$. We assume $c_{k} \neq 0$ unless stated otherwise. The Newton polygon $\Delta_f$ of $\trop(f)$ in Figure \ref{newton_polygon_psi} is the convex hull of the set
$$
\Supp_f = \{(1,2), (2,1), (0,2), (2,0), (1,1), (0,1), (1,0)\}.
$$
\begin{figure}[H]
\centering
\resizebox{0.3\textwidth}{!}{
\begin{tikzpicture}
\node [roundnode, label={[label distance=-3.5pt]north east:\tiny $(2,1)$}] (2) at (2,1) {};
\node [roundnode, label={[label distance=-3.5pt]north:\tiny $(1,2)$}] (1) at (1,2) {};
\node [roundnode, label={[label distance=-5pt]south east:\tiny $(2,0)$}] (4) at (2,0) {};
\node [roundnode, label={[label distance=-5pt]north west:\tiny $(0,2)$}] (3) at (0,2) {};
\node [roundnode, label={[label distance=-5pt]north :\tiny $(1,1)$}] (5) at (1,1) {};
\node [roundnode, label={[label distance=-3.5pt]south:\tiny $(1,0)$}] (7) at (1,0) {};
\node [roundnode, label={[label distance=-3.5pt]north west:\tiny $(0,1)$}] (6) at (0,1) {};
\node [roundnode, label={[label distance=-5pt]south west:\tiny $(0,0)$}] (8) at (0,0) {};
\draw (1) -- (2) -- (4) -- (7) -- (8) -- (6) -- (3) -- (1);
\end{tikzpicture} 
}
\caption{Newton polygon $\Delta_f$.}
\label{newton_polygon_psi}
\end{figure}
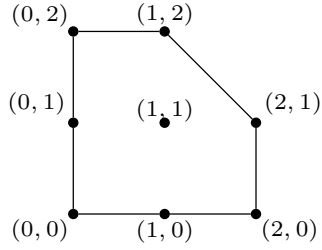
\begin{Definition}\label{def-smth-trp-crv}
A tropical curve is smooth when its dual subdivision is unimodular.
\end{Definition}
In this section, we will give all combinatorial possibilities of smooth tropical curve of $\trop(f)$. The Figure \ref{fig-sbdv-smth} below illustrates the possible unimodular subdivisions of $\Delta_f$.
\begin{figure}[ht]
\begin{subfigure}[b]{3cm}
\centering
\resizebox{3cm}{!}{ 
\begin{tikzpicture}
\node [pointnode, label={[label distance=-3.5pt]north:\tiny $1$}] (1) at (1,2) {};
\node [pointnode, label={[label distance=-5pt]north east:\tiny $2$}] (2) at (2,1) {};
\node [pointnode, label={[label distance=-5pt]south east:\tiny $3$}] (3) at (2,0) {};
\node [pointnode, label={[label distance=-5pt]north west:\tiny $4$}] (4) at (0,2) {};
\node [pointnode, label={[label distance=-3.5pt]north :\tiny $5$}] (5) at (1,1) {};
\node [pointnode, label={[label distance=-3.5pt]south:\tiny $6$}] (6) at (1,0) {};
\node [pointnode, label={[label distance=-5pt]north west:\tiny $7$}] (7) at (0,1) {};
\node [pointnode, label={[label distance=-5pt]south west:\tiny $8$}] (8) at (0,0) {};
\draw (1) -- (2) -- (3) -- (6) -- (8) -- (7) -- (4) -- (1);
\draw (5) -- (1);
\draw (5) -- (2);
\draw (5) -- (8);
\draw (1) -- (8);
\draw (1) -- (7);
\draw (2) -- (8);
\draw (2) -- (6);
\end{tikzpicture} 
}
\caption{}\label{fig:smooth_triangle}
\end{subfigure}
\begin{subfigure}[b]{3cm}
\centering
\resizebox{3cm}{!}{
\begin{tikzpicture}
\node [pointnode, label={[label distance=-3.5pt]north:\tiny $1$}] (1) at (1,2) {};
\node [pointnode, label={[label distance=-5pt]north east:\tiny $2$}] (2) at (2,1) {};
\node [pointnode, label={[label distance=-5pt]south east:\tiny $3$}] (3) at (2,0) {};
\node [pointnode, label={[label distance=-5pt]north west:\tiny $4$}] (4) at (0,2) {};
\node [pointnode, label={[label distance=-3.5pt]north :\tiny $5$}] (5) at (1,1) {};
\node [pointnode, label={[label distance=-3.5pt]south:\tiny $6$}] (6) at (1,0) {};
\node [pointnode, label={[label distance=-5pt]north west:\tiny $7$}] (7) at (0,1) {};
\node [pointnode, label={[label distance=-5pt]south west:\tiny $8$}] (8) at (0,0) {};
\draw (1) -- (2) -- (3) -- (6) -- (8) -- (7) -- (4) -- (1);
\draw (5) -- (1);
\draw (5) -- (2);
\draw (5) -- (7);
\draw (5) -- (6);
\draw (1) -- (7) -- (6) -- (2);
\end{tikzpicture} 
}
\caption{}\label{fig:smooth_square}
\end{subfigure}
\begin{subfigure}[b]{3cm}
\centering
\resizebox{3cm}{!}{
\begin{tikzpicture}
\node [pointnode, label={[label distance=-3.5pt]north:\tiny $1$}] (1) at (1,2) {};
\node [pointnode, label={[label distance=-5pt]north east:\tiny $2$}] (2) at (2,1) {};
\node [pointnode, label={[label distance=-5pt]south east:\tiny $3$}] (3) at (2,0) {};
\node [pointnode, label={[label distance=-5pt]north west:\tiny $4$}] (4) at (0,2) {};
\node [pointnode, label={[label distance=-3.5pt]north :\tiny $5$}] (5) at (1,1) {};
\node [pointnode, label={[label distance=-3.5pt]south:\tiny $6$}] (6) at (1,0) {};
\node [pointnode, label={[label distance=-5pt]north west:\tiny $7$}] (7) at (0,1) {};
\node [pointnode, label={[label distance=-5pt]south west:\tiny $8$}] (8) at (0,0) {};
\draw (1) -- (2) -- (3) -- (6) -- (8) -- (7) -- (4) -- (1);
\draw (5) -- (1);
\draw (5) -- (2);
\draw (5) -- (7);
\draw (5) -- (8);
\draw (5) -- (6);
\draw (1) -- (7);
\draw (2) -- (6);
\end{tikzpicture} 
}
\caption{}\label{fig:smooth_pentagon}
\end{subfigure} 
\begin{subfigure}[b]{3cm}
\centering
\resizebox{3cm}{!}{
\begin{tikzpicture}
\node [pointnode, label={[label distance=-3.5pt]north:\tiny $1$}] (1) at (1,2) {};
\node [pointnode, label={[label distance=-5pt]north east:\tiny $2$}] (2) at (2,1) {};
\node [pointnode, label={[label distance=-5pt]south east:\tiny $3$}] (3) at (2,0) {};
\node [pointnode, label={[label distance=-5pt]north west:\tiny $4$}] (4) at (0,2) {};
\node [pointnode, label={[label distance=-3.5pt]north :\tiny $5$}] (5) at (1,1) {};
\node [pointnode, label={[label distance=-3.5pt]south:\tiny $6$}] (6) at (1,0) {};
\node [pointnode, label={[label distance=-5pt]north west:\tiny $7$}] (7) at (0,1) {};
\node [pointnode, label={[label distance=-5pt]south west:\tiny $8$}] (8) at (0,0) {};
\draw (1) -- (2) -- (3) -- (6) -- (8) -- (7) -- (4) -- (1);
\draw (5) -- (1);
\draw (5) -- (2);
\draw (5) -- (4);
\draw (5) -- (7);
\draw (5) -- (6);
\draw (5) -- (3);
\draw (6) -- (7);
\end{tikzpicture} 
}
\caption{}\label{fig:smooth_hexagon}
\end{subfigure} 
\begin{subfigure}[b]{3cm}
\centering
\resizebox{3cm}{!}{
\begin{tikzpicture}
\node [pointnode, label={[label distance=-3.5pt]north:\tiny $1$}] (1) at (1,2) {};
\node [pointnode, label={[label distance=-5pt]north east:\tiny $2$}] (2) at (2,1) {};
\node [pointnode, label={[label distance=-5pt]south east:\tiny $3$}] (3) at (2,0) {};
\node [pointnode, label={[label distance=-5pt]north west:\tiny $4$}] (4) at (0,2) {};
\node [pointnode, label={[label distance=-3.5pt]north :\tiny $5$}] (5) at (1,1) {};
\node [pointnode, label={[label distance=-3.5pt]south:\tiny $6$}] (6) at (1,0) {};
\node [pointnode, label={[label distance=-5pt]north west:\tiny $7$}] (7) at (0,1) {};
\node [pointnode, label={[label distance=-5pt]south west:\tiny $8$}] (8) at (0,0) {};
\draw (1) -- (2) -- (3) -- (6) -- (8) -- (7) -- (4) -- (1);
\draw (5) -- (1);
\draw (5) -- (2);
\draw (5) -- (4);
\draw (5) -- (7);
\draw (5) -- (8);
\draw (5) -- (6);
\draw (5) -- (3);
\draw (5) -- (1);
\end{tikzpicture} 
}
\caption{}\label{fig:smooth_heptagon}
\end{subfigure} 
\caption{The unimodular subdivisions of $\Delta_f$.}
\label{fig-sbdv-smth}
\end{figure}
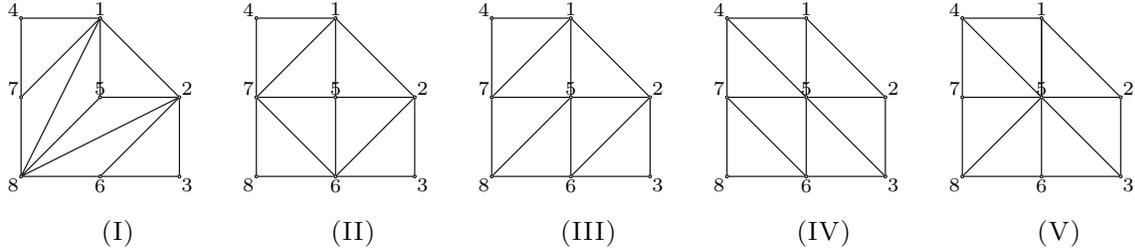
\begin{proof}[Proof of Theorem \ref{thm-smooth}]
We will prove the case (d). Tropical curve $C(\trop(f))$ has a hexagonal cycle when it is the dual of Figure \ref{fig:smooth_hexagon}. This subdivision can be written as
$$
\mathcal{S} = \{[1,2,5], [1,4,5], [2,3,5], [4,5,7], [3,5,6], [5,6,7], [6,7,8]\}.
$$
Each vertex of $C(\trop(f))$ corresponds to a cell of the subdivision as shown in Table \ref{tab-XYcoord-hexagon}.
\begin{table}[ht]
\begin{tabular}{lll}
Cell & $X$ coordinate & $Y$ coordinate \\ \hline
$[1,2,5]$ & $-v_{12} + v_5$ & $-v_{12} + v_5$ \\ \hline
$[1,4,5]$ & $-v_{12} + v_{34}$ & $-v_{12} + v_5$ \\ \hline
$[2,3,5]$ & $-v_{12} + v_5$ & $-v_{12} + v_{34}$ \\ \hline
$[4,5,7]$ & $-v_5 + v_{67}$ & $-v_{34} + v_{67}$ \\ \hline
$[3,5,6]$ & $-v_{34} + v_{67}$ & $-v_5 + v_{67}$ \\ \hline
$[5,6,7]$ & $-v_5 + v_{67}$ & $-v_5 + v_{67}$ \\ \hline
$[6,7,8]$ & $-v_{67} + v_8$ & $-v_{67} + v_8$
\end{tabular}
\caption{$(X,Y)$ coordinates of smooth $C(\trop(f))$ with a hexagonal cycle.}\label{tab-XYcoord-hexagon}
\end{table}
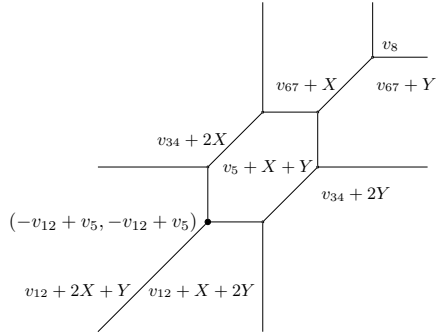
\begin{figure}[ht]
\centering
\resizebox{0.4\textwidth}{!}{ 
\begin{tikzpicture}
\node[roundnode, label = {west:\small $(-v_{12} + v_5, -v_{12} + v_5)$}] (1) at (0,0) {};
\node[pointnode] (2) at (1,0) {};
\node[pointnode] (3) at (2,1) {};
\node[pointnode] (4) at (2,2) {};
\node[pointnode] (5) at (1,2) {};
\node[pointnode] (6) at (0,1) {};
\draw (1) -- (2) -- (3) -- (4) -- (5) -- (6) -- (1);
\draw (1) -- ++(-2,-2);
\draw (2) -- ++(0,-2);
\draw (3) -- ++(2,0);
\draw (5) -- ++(0,2);
\draw (6) -- ++(-2,0);
\node[pointnode] (7) at (3,3) {};
\draw (4) -- (7);
\draw (7) -- ++(1,0);
\draw (7) -- ++(0,1);
\node at (-0.3,1) [right]{\footnotesize $v_{5} + X + Y$};
\node at (2.6,3.2) [right]{\footnotesize $v_{8}$};
\node at (2.5,2.5) [left]{\footnotesize $v_{67} + X$};
\node at (2.5,2.5) [right] {\footnotesize $v_{67} + Y$};
\node at (0.5,1.5) [left] {\footnotesize$v_{34} + 2X$};
\node at (1.5,0.5) [right] {\footnotesize$v_{34} + 2Y$};
\node at (-1.25,-1.25) [left] {\footnotesize $v_{12} + 2X + Y$};
\node at (1,-1.25) [left] {\footnotesize $v_{12} + X + 2Y$};
\end{tikzpicture} 
}
\caption{Tropical curve $C(\trop(f))$ with a hexagonal cycle.}
\label{tropical_curve_hexagon}
\end{figure}
\\ \noindent Vertex $(-v_{12} + v_5, -v_{12} + v_5)$ in Figure \ref{tropical_curve_hexagon} corresponds to cell $[1,2,5]$ means this vertex is the solution of the system of linear equations
$$
v_{12} + X + 2Y = v_{12} + 2X + Y = v_5 + X + Y
$$
which are the $1^{\text{st}}, 2^{\text{nd}}, 5^{\text{th}}$ terms of $\trop(f)(X, Y)$. Furthermore, 
\begin{align*}
& \trop(f) (-v_{12} + v_5, -v_{12} + v_5) \\
=& \min(-2v_{12} + 3v_{5}, -2v_{12} + 3v_{5}, v_{34} - 2v_{12} + 2v_{5}, v_{34} - 2v_{12} + 2v_{5}, \\
& -2v_{12} + 3v_{5}, v_{67} - v_{12} + v_{5}, v_{67} - v_{12} + v_{5}, v_{8}) \\
=& -2v_{12} + 3v_{5},
\end{align*}
which is the value of the $1^{\text{st}}, 2^{\text{nd}}, 5^{\text{th}}$ terms of $\trop(f) (-v_{12} + v_5, -v_{12} + v_5)$. Cell $[1,2,5]$ implies that $-2v_{12} + 3v_{5}$ is less than the other terms of $\trop(f) (-v_{12} + v_5, -v_{12} + v_5)$. It gives inequalities
\begin{equation}\label{125}
-2v_{12} + 3 v_{5} - v_{8} < 0 \qquad  
v_{5} - v_{34} < 0 \qquad  
-v_{12} + 2 v_{5} - v_{67} < 0.
\end{equation}
\noindent After applying the same procedure to the other six cells of $\mathcal{S}$, we have the regular subdivision $\mathcal{S}$ occurs if and only if inequalities 
\begin{center}
\begin{tabular}{rrr}
$  v_{5} - v_{34} < 0 $ & 
$  - v_{12} + 2v_{5} - v_{67} < 0 $ &
$  - 2v_{12} + 3v_{5} - v_{8} < 0 $ \\
$  - v_{12} + v_{34} + v_{5} - v_{67} < 0 $ & 
$  - 2v_{12} + v_{34} + 2v_{5} - v_{8} < 0 $ & 
$  - v_{5} + 2v_{67} - v_{8} < 0 $ \\
$  -v_{12} + 3v_{67} - 2v_{8} < 0 $ & 
$  -v_{34} + 2v_{67} - v_{8} < 0 $ &
\end{tabular}
\end{center}
hold. The above inequalities are equivalent to polyhedron
\begin{align}\label{ineq-smth-hexagon}
\begin{split}
-v_{5} + 2v_{67} - v_{8} < 0 \qquad  
- v_{34} + v_{5} < 0 \qquad  
- v_{12} + v_{34} + v_{5} - v_{67} < 0.
\end{split}
\end{align}
Thus, the tropical curve $C(\trop(f))$ is smooth with a hexagonal cycle if and only if (\ref{ineq-smth-hexagon}) holds. The same arguments hold for the cases of other cycles.
\end{proof}
\section{Non-smooth tropical curves}\label{sect-non-smth}
A tropical curve $C(\trop(f))$ is non-smooth when its dual is a non-unimodular subdivision. These subdivisions do not correspond to the top-dimensional cones of the secondary fan of $\Delta_f$. Thus, some or all of the inequalities in the conditions of $(v_{12}, v_{34}, v_5, v_{67}, v_8)$ will be non-strict inequalities. 
\begin{table}[ht]
\centering
\begin{tabular}{l|l|l}
1 cell &  
\resizebox{0.07\textwidth}{!}{
\begin{tikzpicture}
\node [pointnode] (2) at (2,1) {};
\node [pointnode] (1) at (1,2) {};
\node [pointnode] (4) at (2,0) {};
\node [pointnode] (3) at (0,2) {};
\node [pointnode] (5) at (1,1) {};
\node [pointnode] (7) at (1,0) {};
\node [pointnode] (6) at (0,1) {};
\node [pointnode] (8) at (0,0) {};
\draw (1) -- (2) -- (4) -- (7) -- (8) -- (6) -- (3) -- (1);
\end{tikzpicture}
}& \\ 
2 cells&  
\resizebox{0.07\textwidth}{!}{
\begin{tikzpicture}
\node [pointnode] (2) at (2,1) {};
\node [pointnode] (1) at (1,2) {};
\node [pointnode] (4) at (2,0) {};
\node [pointnode] (3) at (0,2) {};
\node [pointnode] (5) at (1,1) {};
\node [pointnode] (7) at (1,0) {};
\node [pointnode] (6) at (0,1) {};
\node [pointnode] (8) at (0,0) {};
\draw (1) -- (2) -- (4) -- (7) -- (8) -- (6) -- (3) -- (1);
\draw (6) -- (7);
\end{tikzpicture}
}
\resizebox{0.07\textwidth}{!}{
\begin{tikzpicture}
\node [pointnode] (2) at (2,1) {};
\node [pointnode] (1) at (1,2) {};
\node [pointnode] (4) at (2,0) {};
\node [pointnode] (3) at (0,2) {};
\node [pointnode] (5) at (1,1) {};
\node [pointnode] (7) at (1,0) {};
\node [pointnode] (6) at (0,1) {};
\node [pointnode] (8) at (0,0) {};
\draw (1) -- (2) -- (4) -- (7) -- (8) -- (6) -- (3) -- (1);
\draw (3) -- (4);
\end{tikzpicture}
} &\\ 
3 cells &  
\resizebox{0.07\textwidth}{!}{
\begin{tikzpicture}
\node [pointnode] (2) at (2,1) {};
\node [pointnode] (1) at (1,2) {};
\node [pointnode] (4) at (2,0) {};
\node [pointnode] (3) at (0,2) {};
\node [pointnode] (5) at (1,1) {};
\node [pointnode] (7) at (1,0) {};
\node [pointnode] (6) at (0,1) {};
\node [pointnode] (8) at (0,0) {};
\draw (1) -- (2) -- (4) -- (7) -- (8) -- (6) -- (3) -- (1);
\draw (6) -- (7);
\draw (3) -- (4);
\end{tikzpicture}
}
\resizebox{0.07\textwidth}{!}{
\begin{tikzpicture}
\node [pointnode] (2) at (2,1) {};
\node [pointnode] (1) at (1,2) {};
\node [pointnode] (4) at (2,0) {};
\node [pointnode] (3) at (0,2) {};
\node [pointnode] (5) at (1,1) {};
\node [pointnode] (7) at (1,0) {};
\node [pointnode] (6) at (0,1) {};
\node [pointnode] (8) at (0,0) {};
\draw (1) -- (2) -- (4) -- (7) -- (8) -- (6) -- (3) -- (1);
\draw (5) -- (8);
\draw (5) -- (1);
\draw (5) -- (2);
\end{tikzpicture}
}
\resizebox{0.07\textwidth}{!}{
\begin{tikzpicture}
\node [pointnode] (2) at (2,1) {};
\node [pointnode] (1) at (1,2) {};
\node [pointnode] (4) at (2,0) {};
\node [pointnode] (3) at (0,2) {};
\node [pointnode] (5) at (1,1) {};
\node [pointnode] (7) at (1,0) {};
\node [pointnode] (6) at (0,1) {};
\node [pointnode] (8) at (0,0) {};
\draw (1) -- (2) -- (4) -- (7) -- (8) -- (6) -- (3) -- (1);
\draw (1) -- (8);
\draw (2) -- (8);
\end{tikzpicture}
}
\resizebox{0.07\textwidth}{!}{
\begin{tikzpicture}
\node [pointnode] (2) at (2,1) {};
\node [pointnode] (1) at (1,2) {};
\node [pointnode] (4) at (2,0) {};
\node [pointnode] (3) at (0,2) {};
\node [pointnode] (5) at (1,1) {};
\node [pointnode] (7) at (1,0) {};
\node [pointnode] (6) at (0,1) {};
\node [pointnode] (8) at (0,0) {};
\draw (1) -- (2) -- (4) -- (7) -- (8) -- (6) -- (3) -- (1);
\draw (1) -- (6);
\draw (2) -- (7);
\end{tikzpicture}
} &
\resizebox{0.07\textwidth}{!}{
\begin{tikzpicture}
\node [pointnode] (2) at (2,1) {};
\node [pointnode] (1) at (1,2) {};
\node [pointnode] (4) at (2,0) {};
\node [pointnode] (3) at (0,2) {};
\node [pointnode] (5) at (1,1) {};
\node [pointnode] (7) at (1,0) {};
\node [pointnode] (6) at (0,1) {};
\node [pointnode] (8) at (0,0) {};
\draw (1) -- (2) -- (4) -- (7) -- (8) -- (6) -- (3) -- (1);
\draw (3) -- (4);
\draw (5) -- (8);
\end{tikzpicture}
} \\ 
4 cells &  
\resizebox{0.07\textwidth}{!}{
\begin{tikzpicture}
\node [pointnode] (2) at (2,1) {};
\node [pointnode] (1) at (1,2) {};
\node [pointnode] (4) at (2,0) {};
\node [pointnode] (3) at (0,2) {};
\node [pointnode] (5) at (1,1) {};
\node [pointnode] (7) at (1,0) {};
\node [pointnode] (6) at (0,1) {};
\node [pointnode] (8) at (0,0) {};
\draw (1) -- (2) -- (4) -- (7) -- (8) -- (6) -- (3) -- (1);
\draw (1) -- (7);
\draw (2) -- (6);
\end{tikzpicture}
}
\resizebox{0.07\textwidth}{!}{
\begin{tikzpicture}
\node [pointnode] (2) at (2,1) {};
\node [pointnode] (1) at (1,2) {};
\node [pointnode] (4) at (2,0) {};
\node [pointnode] (3) at (0,2) {};
\node [pointnode] (5) at (1,1) {};
\node [pointnode] (7) at (1,0) {};
\node [pointnode] (6) at (0,1) {};
\node [pointnode] (8) at (0,0) {};
\draw (1) -- (2) -- (4) -- (7) -- (8) -- (6) -- (3) -- (1);
\draw (1) -- (6) -- (7) -- (2);
\end{tikzpicture}
} & 
\resizebox{0.07\textwidth}{!}{
\begin{tikzpicture}
\node [pointnode] (2) at (2,1) {};
\node [pointnode] (1) at (1,2) {};
\node [pointnode] (4) at (2,0) {};
\node [pointnode] (3) at (0,2) {};
\node [pointnode] (5) at (1,1) {};
\node [pointnode] (7) at (1,0) {};
\node [pointnode] (6) at (0,1) {};
\node [pointnode] (8) at (0,0) {};
\draw (1) -- (2) -- (4) -- (7) -- (8) -- (6) -- (3) -- (1);
\draw (3) -- (4);
\draw (5) -- (6);
\draw (5) -- (7);
\end{tikzpicture}
}
\resizebox{0.07\textwidth}{!}{
\begin{tikzpicture}
\node [pointnode] (2) at (2,1) {};
\node [pointnode] (1) at (1,2) {};
\node [pointnode] (4) at (2,0) {};
\node [pointnode] (3) at (0,2) {};
\node [pointnode] (5) at (1,1) {};
\node [pointnode] (7) at (1,0) {};
\node [pointnode] (6) at (0,1) {};
\node [pointnode] (8) at (0,0) {};
\draw (1) -- (2) -- (4) -- (7) -- (8) -- (6) -- (3) -- (1);
\draw (3) -- (4);
\draw (5) -- (1);
\draw (5) -- (2);
\end{tikzpicture}
}\\ 
5 cells &  
\resizebox{0.07\textwidth}{!}{
\begin{tikzpicture}
\node [pointnode] (2) at (2,1) {};
\node [pointnode] (1) at (1,2) {};
\node [pointnode] (4) at (2,0) {};
\node [pointnode] (3) at (0,2) {};
\node [pointnode] (5) at (1,1) {};
\node [pointnode] (7) at (1,0) {};
\node [pointnode] (6) at (0,1) {};
\node [pointnode] (8) at (0,0) {};
\draw (1) -- (2) -- (4) -- (7) -- (8) -- (6) -- (3) -- (1);
\draw (1) -- (7);
\draw (2) -- (6);
\draw (5) -- (8);
\end{tikzpicture}
}
\resizebox{0.07\textwidth}{!}{
\begin{tikzpicture}
\node [pointnode] (2) at (2,1) {};
\node [pointnode] (1) at (1,2) {};
\node [pointnode] (4) at (2,0) {};
\node [pointnode] (3) at (0,2) {};
\node [pointnode] (5) at (1,1) {};
\node [pointnode] (7) at (1,0) {};
\node [pointnode] (6) at (0,1) {};
\node [pointnode] (8) at (0,0) {};
\draw (1) -- (2) -- (4) -- (7) -- (8) -- (6) -- (3) -- (1);
\draw (1) -- (7);
\draw (2) -- (6);
\draw (6) -- (7);
\end{tikzpicture}
}
\resizebox{0.07\textwidth}{!}{
\begin{tikzpicture}
\node [pointnode] (2) at (2,1) {};
\node [pointnode] (1) at (1,2) {};
\node [pointnode] (4) at (2,0) {};
\node [pointnode] (3) at (0,2) {};
\node [pointnode] (5) at (1,1) {};
\node [pointnode] (7) at (1,0) {};
\node [pointnode] (6) at (0,1) {};
\node [pointnode] (8) at (0,0) {};
\draw (1) -- (2) -- (4) -- (7) -- (8) -- (6) -- (3) -- (1);
\draw (3) -- (4);
\draw (5) -- (8);
\draw (1) -- (5) -- (2);
\end{tikzpicture}
}
\resizebox{0.07\textwidth}{!}{
\begin{tikzpicture}
\node [pointnode] (2) at (2,1) {};
\node [pointnode] (1) at (1,2) {};
\node [pointnode] (4) at (2,0) {};
\node [pointnode] (3) at (0,2) {};
\node [pointnode] (5) at (1,1) {};
\node [pointnode] (7) at (1,0) {};
\node [pointnode] (6) at (0,1) {};
\node [pointnode] (8) at (0,0) {};
\draw (1) -- (2) -- (4) -- (7) -- (8) -- (6) -- (3) -- (1);
\draw (1) -- (6);
\draw (2) -- (7);
\draw (5) -- (8);
\draw (1) -- (5) -- (2);
\end{tikzpicture}
}
\resizebox{0.07\textwidth}{!}{
\begin{tikzpicture}
\node [pointnode] (2) at (2,1) {};
\node [pointnode] (1) at (1,2) {};
\node [pointnode] (4) at (2,0) {};
\node [pointnode] (3) at (0,2) {};
\node [pointnode] (5) at (1,1) {};
\node [pointnode] (7) at (1,0) {};
\node [pointnode] (6) at (0,1) {};
\node [pointnode] (8) at (0,0) {};
\draw (1) -- (2) -- (4) -- (7) -- (8) -- (6) -- (3) -- (1);
\draw (5) -- (8);
\draw (1) -- (5) -- (2);
\draw (1) -- (8) -- (2);
\end{tikzpicture}
}
\resizebox{0.07\textwidth}{!}{
\begin{tikzpicture}
\node [pointnode] (2) at (2,1) {};
\node [pointnode] (1) at (1,2) {};
\node [pointnode] (4) at (2,0) {};
\node [pointnode] (3) at (0,2) {};
\node [pointnode] (5) at (1,1) {};
\node [pointnode] (7) at (1,0) {};
\node [pointnode] (6) at (0,1) {};
\node [pointnode] (8) at (0,0) {};
\draw (1) -- (2) -- (4) -- (7) -- (8) -- (6) -- (3) -- (1);
\draw (1) -- (8) -- (2);
\draw (1) -- (6);
\draw (2) -- (7);
\end{tikzpicture}
} & 
\resizebox{0.07\textwidth}{!}{
\begin{tikzpicture}
\node [pointnode] (2) at (2,1) {};
\node [pointnode] (1) at (1,2) {};
\node [pointnode] (4) at (2,0) {};
\node [pointnode] (3) at (0,2) {};
\node [pointnode] (5) at (1,1) {};
\node [pointnode] (7) at (1,0) {};
\node [pointnode] (6) at (0,1) {};
\node [pointnode] (8) at (0,0) {};
\draw (1) -- (2) -- (4) -- (7) -- (8) -- (6) -- (3) -- (1);
\draw (3) -- (4);
\draw (5) -- (6) -- (7) -- (5);
\end{tikzpicture}
}
\resizebox{0.07\textwidth}{!}{
\begin{tikzpicture}
\node [pointnode] (2) at (2,1) {};
\node [pointnode] (1) at (1,2) {};
\node [pointnode] (4) at (2,0) {};
\node [pointnode] (3) at (0,2) {};
\node [pointnode] (5) at (1,1) {};
\node [pointnode] (7) at (1,0) {};
\node [pointnode] (6) at (0,1) {};
\node [pointnode] (8) at (0,0) {};
\draw (1) -- (2) -- (4) -- (7) -- (8) -- (6) -- (3) -- (1);
\draw (3) -- (4);
\draw (5) -- (6);
\draw (5) -- (7);
\draw (5) -- (8);
\end{tikzpicture}
}\\ 
6 cells &  
\resizebox{0.07\textwidth}{!}{
\begin{tikzpicture}
\node [pointnode] (2) at (2,1) {};
\node [pointnode] (1) at (1,2) {};
\node [pointnode] (4) at (2,0) {};
\node [pointnode] (3) at (0,2) {};
\node [pointnode] (5) at (1,1) {};
\node [pointnode] (7) at (1,0) {};
\node [pointnode] (6) at (0,1) {};
\node [pointnode] (8) at (0,0) {};
\draw (1) -- (2) -- (4) -- (7) -- (8) -- (6) -- (3) -- (1);
\draw (1) -- (7);
\draw (2) -- (6);
\draw (1) -- (6);
\draw (2) -- (7);
\end{tikzpicture}
}
\resizebox{0.07\textwidth}{!}{
\begin{tikzpicture}
\node [pointnode] (2) at (2,1) {};
\node [pointnode] (1) at (1,2) {};
\node [pointnode] (4) at (2,0) {};
\node [pointnode] (3) at (0,2) {};
\node [pointnode] (5) at (1,1) {};
\node [pointnode] (7) at (1,0) {};
\node [pointnode] (6) at (0,1) {};
\node [pointnode] (8) at (0,0) {};
\draw (1) -- (2) -- (4) -- (7) -- (8) -- (6) -- (3) -- (1);
\draw (1) -- (7);
\draw (2) -- (6);
\draw (3) -- (4);
\end{tikzpicture}
} &
\end{tabular}
\caption{The non-unimodular subdivisions of $\Delta_f$.}
\label{table-sbdv-nonsmth}
\end{table}
Table \ref{table-sbdv-nonsmth} presents the subdivisions of $\Delta_f$ into polygons, some of its cells have areas greater than half. However, it is important to note that the corresponding tropical curves $C(\trop(f))$ associated with the subdivisions listed in the right column do not occur in practice.
\begin{Proposition}
Let $\trop(f)$ be as defined in \textup{(\ref{eq-our-trop-f-sect3})} and $\Delta_f$ be its Newton polygon. Then, the five subdivisions on the right column of \textup{Table \ref{table-sbdv-nonsmth}} never occur as the regular subdivisions of $\Delta_f$ for any $v = (v_{12}, v_{34}, v_5, v_{67}, v_8)$.
\end{Proposition}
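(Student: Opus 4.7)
The plan is to dispose of all five excluded subdivisions by exploiting a single forced equality together with two symmetry-driven coplanarity identities. Throughout I use the labeling of Table \ref{table-sbdv-nonsmth}, so that $3=(0,2),\,4=(2,0),\,5=(1,1),\,6=(0,1),\,7=(1,0),\,8=(0,0)$ and the edge $[3,4]$ is the anti-diagonal of $\Delta_f$ passing through $5$.

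The first step is the observation that every one of the five subdivisions contains the edge $[3,4]$ and makes $5$ a vertex of the subdivision (because $5$ has at least one other incident edge in every case). Since the lifted segment from $(0,2,v_{34})$ to $(2,0,v_{34})$ has constant height $v_{34}$, having $[3,4]$ on the lower hull while $5$ is simultaneously a vertex of that lower hull forces $v_5=v_{34}$: a strictly larger $v_5$ would push $5$ above the lower hull and strip it of any incident edge, and a strictly smaller $v_5$ would pull the lower hull below $[3,4]$ and destroy it as an edge. Once $v_5=v_{34}$ is secured, combined with $v_1=v_2=v_{12}$, $v_3=v_4=v_{34}$, $v_6=v_7=v_{67}$, a short linear-algebra check gives the two coplanarity identities: \textbf{(I)} the lifts of $1,2,3,4,5$ lie on the plane $Z=(v_{12}-v_{34})(X+Y-3)+v_{12}$, and \textbf{(II)} the lifts of $3,4,5,6,7$ lie on the plane $Z=(v_{34}-v_{67})(X+Y)+2v_{67}-v_{34}$.

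With (I) and (II) in hand I would dispatch the five cases. The subdivision with upper triangles $[1,2,5],[1,3,5],[2,4,5]$ is excluded by (I), which says the three triangles share a single plane and must merge into $[1,2,3,4]$, contradicting the internal edges $[5,1]$ and $[5,2]$. The subdivision with lower cells $[3,5,6],[5,4,7],[5,6,7],[6,7,8]$ is excluded by (II), which forces the first three cells to share a plane. For the two subdivisions carrying a four-vertex lower cell, namely $\{[3,5,6,8],[5,4,7,8]\}$ and $\{[3,5,6],[5,4,7],[5,6,7,8]\}$, the coplanarity required for the four-vertex cell forces $2v_{67}=v_{34}+v_8$; substituting this into the plane in (II) shows it coincides with the common plane of the four-vertex cell, so all of $3,4,5,6,7,8$ become coplanar and the entire lower region collapses into one cell. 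The remaining subdivision has lower cells $[3,5,6],[5,4,7],[5,6,8],[5,7,8]$: here the validity of $[3,5,6]$ as a lower face requires $8$ to lie strictly above its plane, giving $v_{34}+v_8>2v_{67}$, while the validity of $[5,6,8]$ requires $4$ to lie strictly above its plane, giving $v_{34}+v_8<2v_{67}$, and these strict inequalities cannot hold simultaneously.

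The hardest part is this last case, where the obstruction is a clash of strict inequalities rather than a plane-merging phenomenon; the other four cases fall almost mechanically from (I) and (II) once the preliminary reduction $v_5=v_{34}$ is in place. The only substantive computations are the verifications of (I), (II), and the two plane equations used in the last case.
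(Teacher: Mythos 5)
There is a genuine gap at the foundational step, and it undermines the whole case analysis. You claim that the simultaneous presence of the edge $[3,4]$ and of $5$ as a vertex forces $v_5=v_{34}$. First, this equality is self-defeating: if $v_5=v_{34}$ then the lift $(1,1,v_5)$ is the midpoint of the lifted segment joining $(0,2,v_{34})$ and $(2,0,v_{34})$, hence is a nontrivial convex combination of two other lifted points and can never be a vertex of the hull. So if your dichotomy really forced $v_5=v_{34}$, the proof would be over immediately and identities (I), (II) and the five-case analysis would be reasoning about an empty set of parameters. Second, and more seriously, the dichotomy itself is unsound: your exclusion of $v_5<v_{34}$ rests on reading the drawn segment from $3$ to $4$ as a single undivided edge of the subdivision. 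But in all five pictures $5$ is a vertex lying on that segment with further incident edges, so the segment is necessarily realized as the two edges $[3,5]$ and $[5,4]$ meeting at $5$ --- which is exactly what happens when $v_5<v_{34}$, and indeed $v_5<v_{34}$ is the \emph{only} regime in which $5$ can be a vertex at all (a supporting functional strict at $3$ and $4$ is strict at their midpoint). So the case you dismiss is precisely the one that must be refuted, and your coplanarity identities (I) and (II), which are correct only under $v_5=v_{34}$, do not apply to it.

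The obstruction actually lies elsewhere, and it is uniform across the five pictures: each of them also contains a cell having \emph{both} $3$ and $4$ as vertices (the quadrilateral $\mathrm{conv}\{1,2,4,3\}$ in four of the cases, the bottom cell $\mathrm{conv}\{3,6,8,7,4\}$ in the remaining one). The supporting plane of such a cell passes through the lifts of $3$ and $4$ and hence takes the value $v_{34}$ at $(1,1)$, forcing $v_5\ge v_{34}$; this is incompatible with the strict inequality $v_5<v_{34}$ required for $5$ to be a vertex. Equivalently --- and this is the paper's one-line argument --- all neighbours of the vertex $5$ in each picture lie in the closed half-plane bounded by the line through $3$, $5$, $4$, so the dual $2$-cell of $5$ in the tropical curve cannot close up into a bounded polygon, contradicting the fact that $(1,1)$ is interior to $\Delta_f$. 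Your computations in (I), (II) and in the last case are individually correct under the hypothesis $v_5=v_{34}$, but the proof needs to be rebuilt on the correct inequality $v_5<v_{34}$ and on the cell containing $3$ and $4$, at which point the five cases collapse into one.
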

\begin{proof}
The proof can be accomplished by examining the shape of the subdivision. Let us assume that one of the five subdivisions is viable. In doing so, we observe that the interior point $(1,1)$ forms a vertex of the Newton polygon $\Delta_f$. However, it is evident that its dual cannot form a closed cycle in a tropical curve.
\end{proof}
\begin{Theorem}\label{thm-non-smooth}
The \textup{Table \ref{characterization_table2}} below shows the necessary and sufficient conditions for valuations $(v_{12}, v_{34}, v_5, v_{67}, v_8)$ to give specific non-smooth tropical curve $C(\trop(f))$ listed in the left column.
\begin{table}[ht]
\centering
\begin{tabular}{rllr}
&Non-smooth tropical curves & Subdivisions & Conditions of $v_{ij}$ \\ \hline
\multirow{3}{1em}{(a)} & \multirow{3}{17 em}{ Four vertices with no cycle } &  
\multirow{3}{2cm}{
\resizebox{0.07\textwidth}{!}{
\begin{tikzpicture}
\node [pointnode] (2) at (2,1) {};
\node [pointnode] (1) at (1,2) {};
\node [pointnode] (4) at (2,0) {};
\node [pointnode] (3) at (0,2) {};
\node [pointnode] (5) at (1,1) {};
\node [pointnode] (7) at (1,0) {};
\node [pointnode] (6) at (0,1) {};
\node [pointnode] (8) at (0,0) {};
\draw (1) -- (2) -- (4) -- (7) -- (8) -- (6) -- (3) -- (1);
\draw (1) -- (6) -- (7) -- (2);
\end{tikzpicture}
}
}
&  $v_{12} - 2v_{34} + v_{67} < 0$\\
&&& $-v_{12} + 3v_{67} - 2v_8 \leq 0$ \\
&&& $v_{12} - 2v_5 + v_{67} \leq 0$ \\ \hline
\multirow{3}{1em}{(b)} & \multirow{3}{17 em}{ A square cycle with two bounded edges } &  
\multirow{3}{2cm}{
\resizebox{0.07\textwidth}{!}{
\begin{tikzpicture}
\node [pointnode] (2) at (2,1) {};
\node [pointnode] (1) at (1,2) {};
\node [pointnode] (4) at (2,0) {};
\node [pointnode] (3) at (0,2) {};
\node [pointnode] (5) at (1,1) {};
\node [pointnode] (7) at (1,0) {};
\node [pointnode] (6) at (0,1) {};
\node [pointnode] (8) at (0,0) {};
\draw (1) -- (2) -- (4) -- (7) -- (8) -- (6) -- (3) -- (1);
\draw (1) -- (6) -- (2) -- (7);
\draw (1) -- (7);
\end{tikzpicture}
}
}
& $-v_{12} + 3v_{67} - 2v_8 < 0$ \\
&&& $v_{12} - v_{34} - v_{67} + v_8 < 0$ \\
&&& $v_5 - 2v_{67} + v_8 = 0$ \\ \hline
\multirow{3}{1em}{(c)} & \multirow{3}{18 em}{A pentagon cycle with seven rays} &  
\multirow{3}{2cm}{
\resizebox{0.07\textwidth}{!}{
\begin{tikzpicture}
\node [pointnode] (2) at (2,1) {};
\node [pointnode] (1) at (1,2) {};
\node [pointnode] (4) at (2,0) {};
\node [pointnode] (3) at (0,2) {};
\node [pointnode] (5) at (1,1) {};
\node [pointnode] (7) at (1,0) {};
\node [pointnode] (6) at (0,1) {};
\node [pointnode] (8) at (0,0) {};
\draw (1) -- (2) -- (4) -- (7) -- (8) -- (6) -- (3) -- (1);
\draw (1) -- (7);
\draw (2) -- (6);
\draw (5) -- (8);
\end{tikzpicture}
}
}
& $-v_{34} + 2v_{67} - v_8 < 0$ \\
&&& $v_{12} - v_{34} - v_{67} + v_8 < 0$ \\
&&& $-v_{12} + v_{34} + v_5 - v_{67} = 0$ \\ \hline
\multirow{3}{1em}{(d)} & \multirow{3}{17 em}{ Five vertices with no cycle} &  
\multirow{3}{2cm}{
\resizebox{0.07\textwidth}{!}{
\begin{tikzpicture}
\node [pointnode] (2) at (2,1) {};
\node [pointnode] (1) at (1,2) {};
\node [pointnode] (4) at (2,0) {};
\node [pointnode] (3) at (0,2) {};
\node [pointnode] (5) at (1,1) {};
\node [pointnode] (7) at (1,0) {};
\node [pointnode] (6) at (0,1) {};
\node [pointnode] (8) at (0,0) {};
\draw (1) -- (2) -- (4) -- (7) -- (8) -- (6) -- (3) -- (1);
\draw (6) -- (1) -- (8) -- (2) -- (7);
\end{tikzpicture}
}
}
& $-v_{34} + 2v_{67} - v_8 < 0$ \\
&&& $v_{12} - 3v_{67} + 2v_8 < 0$ \\
&&& $2v_{12} - 3v_5 + v_8 \leq 0$ \\ \hline
\multirow{3}{1em}{(e)} & \multirow{3}{17 em}{ A triangle cycle with seven rays } &  
\multirow{3}{2cm}{
\resizebox{0.07\textwidth}{!}{
\begin{tikzpicture}
\node [pointnode] (2) at (2,1) {};
\node [pointnode] (1) at (1,2) {};
\node [pointnode] (4) at (2,0) {};
\node [pointnode] (3) at (0,2) {};
\node [pointnode] (5) at (1,1) {};
\node [pointnode] (7) at (1,0) {};
\node [pointnode] (6) at (0,1) {};
\node [pointnode] (8) at (0,0) {};
\draw (1) -- (2) -- (4) -- (7) -- (8) -- (6) -- (3) -- (1);
\draw (1) -- (5) -- (2);
\draw (1) -- (6);
\draw (5) -- (8);
\draw (2) -- (7);
\end{tikzpicture}
}
}
& $v_{12} - 3v_{67} + 2v_8 < 0$ \\
&&& $-v_{34} + 2v_{67} - v_8 < 0$ \\
&&& $-v_{12} + v_5 + v_{67} - v_8 = 0$ \\ \hline
\multirow{3}{1em}{(f)} & \multirow{3}{17 em}{ A square cycle with no bounded edge } &  
\multirow{3}{2cm}{
\resizebox{0.07\textwidth}{!}{
\begin{tikzpicture}
\node [pointnode] (2) at (2,1) {};
\node [pointnode] (1) at (1,2) {};
\node [pointnode] (4) at (2,0) {};
\node [pointnode] (3) at (0,2) {};
\node [pointnode] (5) at (1,1) {};
\node [pointnode] (7) at (1,0) {};
\node [pointnode] (6) at (0,1) {};
\node [pointnode] (8) at (0,0) {};
\draw (1) -- (2) -- (4) -- (7) -- (8) -- (6) -- (3) -- (1);
\draw (1) -- (7);
\draw (2) -- (6);
\end{tikzpicture}
}
}
& $v_{12} - 2v_{34} + v_{67} < 0$ \\
&&& $v_{12} - v_{34} - v_{67} + v_8 = 0$ \\
&&& $v_{12} - v_{34} - v_5 + v_{67}  = 0$ \\ \hline
\multirow{3}{1em}{(g)} & \multirow{3}{17 em}{ A square cycle with one bounded edge } &  
\multirow{3}{2cm}{
\resizebox{0.07\textwidth}{!}{
\begin{tikzpicture}
\node [pointnode] (2) at (2,1) {};
\node [pointnode] (1) at (1,2) {};
\node [pointnode] (4) at (2,0) {};
\node [pointnode] (3) at (0,2) {};
\node [pointnode] (5) at (1,1) {};
\node [pointnode] (7) at (1,0) {};
\node [pointnode] (6) at (0,1) {};
\node [pointnode] (8) at (0,0) {};
\draw (1) -- (2) -- (4) -- (7) -- (8) -- (6) -- (3) -- (1);
\draw (1) -- (7);
\draw (2) -- (6);
\draw (6) -- (7);
\end{tikzpicture}
}
}
& $v_{12} - 2v_{34} + v_{67} < 0$ \\
&&& $-v_{12} + v_{34} + v_{67} - v_8 <0$ \\ 
&&& $-v_{12} + v_{34} + v_5 - v_{67} = 0$ \\ \hline
\multirow{3}{1em}{(h)} & \multirow{3}{17 em}{A trivalent pentagon cycle} &  
\multirow{3}{2cm}{
\resizebox{0.07\textwidth}{!}{
\begin{tikzpicture}
\node [pointnode] (2) at (2,1) {};
\node [pointnode] (1) at (1,2) {};
\node [pointnode] (4) at (2,0) {};
\node [pointnode] (3) at (0,2) {};
\node [pointnode] (5) at (1,1) {};
\node [pointnode] (7) at (1,0) {};
\node [pointnode] (6) at (0,1) {};
\node [pointnode] (8) at (0,0) {};
\draw (1) -- (2) -- (4) -- (7) -- (8) -- (6) -- (3) -- (1);
\draw (3) -- (4);
\draw (1) -- (5) -- (2);
\draw (5) -- (8);
\end{tikzpicture}
}
}
& $-v_{34} + v_5 < 0$  \\
&&& $-2v_{12} + v_{34} + 2v_5 - v_8 < 0$ \\
&&& $v_{34} - 2v_{67} + v_8 \leq 0$  \\ \hline
\multirow{3}{1em}{(i)} & \multirow{3}{17 em}{No bounded edge} &  
\multirow{3}{2cm}{
\resizebox{0.07\textwidth}{!}{
\begin{tikzpicture}
\node [pointnode] (2) at (2,1) {};
\node [pointnode] (1) at (1,2) {};
\node [pointnode] (4) at (2,0) {};
\node [pointnode] (3) at (0,2) {};
\node [pointnode] (5) at (1,1) {};
\node [pointnode] (7) at (1,0) {};
\node [pointnode] (6) at (0,1) {};
\node [pointnode] (8) at (0,0) {};
\draw (1) -- (2) -- (4) -- (7) -- (8) -- (6) -- (3) -- (1);
\end{tikzpicture}
}
}
& $u_{8} + 2u_{12} - 3u_{34} = 0$ \\
&&& $-u_{12} + 2u_{34} - u_{67} \leq 0$ \\
&&& $-u_{5} + u_{34} \leq 0$ \\ \hline
\multirow{3}{1em}{(j)} & \multirow{3}{17 em}{One bounded edge with seven rays} &  
\multirow{3}{2cm}{
\resizebox{0.07\textwidth}{!}{
\begin{tikzpicture}
\node [pointnode] (2) at (2,1) {};
\node [pointnode] (1) at (1,2) {};
\node [pointnode] (4) at (2,0) {};
\node [pointnode] (3) at (0,2) {};
\node [pointnode] (5) at (1,1) {};
\node [pointnode] (7) at (1,0) {};
\node [pointnode] (6) at (0,1) {};
\node [pointnode] (8) at (0,0) {};
\draw (1) -- (2) -- (4) -- (7) -- (8) -- (6) -- (3) -- (1);
\draw (6) -- (7);
\end{tikzpicture}
}
}
& $-u_{8} - u_{34} + 2u_{67} < 0$ \\
&&& $-u_{5} + u_{34} \leq 0$ \\
&&& $u_{12} - 2u_{34} + u_{67} = 0$ \\ \hline
\end{tabular}\caption{Some non-smooth tropical curves $\trop(f)$.}\label{characterization_table2}
\end{table}
\end{Theorem}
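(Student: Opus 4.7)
The plan is to mimic the computational strategy already used in the proof of Theorem \ref{thm-smooth} case (d), but adapted to accommodate cells of area larger than $\tfrac12$. For each of the ten entries in Table \ref{characterization_table2}, I would first list the cells of the indicated subdivision of $\Delta_f$, assign to each cell the subset of terms of $\trop(f)(X,Y)$ indexed by the lattice points of that cell, and then locate the common vertex of the dual tropical curve $C(\trop(f))$ by solving the linear system equating those terms.

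The key observation that separates this argument from Theorem \ref{thm-smooth} is the following dichotomy. A unimodular cell (a triangle of area $\tfrac12$) contributes exactly three equal terms at its dual vertex, so the requirement that these equal values be strictly smaller than the remaining terms gives only \emph{strict} inequalities on $(v_{12},v_{34},v_5,v_{67},v_8)$, just as before. A non-unimodular cell, by contrast, contains more than three lattice points, so at its dual vertex four or more terms of $\trop(f)$ attain the common minimum simultaneously; the additional coincidences force linear \emph{equalities} among the $v_k$, while the minimality condition still produces strict inequalities against the remaining terms. I would run this procedure once per cell in each subdivision, producing a raw system of strict inequalities and equalities for every row of Table \ref{characterization_table2}.

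The second step is to reduce each raw system to the three conditions stated in the right column. Here I would exploit the secondary-fan interpretation: each non-unimodular subdivision in Table \ref{table-sbdv-nonsmth} sits on a wall (or a lower-dimensional face) of the secondary fan of $\Delta_f$ between two of the five unimodular cones characterized in Theorem \ref{thm-smooth}. The equalities in Table \ref{characterization_table2} are exactly the hyperplanes along which the strict inequalities of Table \ref{characterization_table} become equalities, so the reduction amounts to intersecting the relevant smooth cones with the appropriate walls and discarding redundant half-spaces. For cases (i) and (j), where the interior lattice point $(1,1)$ plays no role, I would additionally invoke the preceding Proposition to guarantee that no forbidden refinement occurs.

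The main obstacle I anticipate is the final pruning: for subdivisions with six or seven cells (for instance (d), (e), (h)) the raw system produced in step one contains up to eight conditions, and showing that exactly the three given ones are sharp requires systematically checking that each discarded condition is a nonnegative linear combination of those retained, with the strict/non-strict distinctions preserved. The mixed presence of equalities from non-unimodular cells and strict inequalities from adjacent unimodular triangles makes this bookkeeping delicate, but once organized case by case using the wall-of-the-secondary-fan viewpoint the verification is essentially mechanical, and cases (a)--(j) then follow from a single template.
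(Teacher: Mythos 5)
Your overall computational framework (cells $\to$ linear systems at dual vertices $\to$ inequalities, followed by pruning) matches the paper's, but the dichotomy you build the argument on is wrong, and it would produce incorrect conditions for half of the rows of Table \ref{characterization_table2}. You claim that a non-unimodular cell ``contains more than three lattice points, so at its dual vertex four or more terms of $\trop(f)$ attain the common minimum simultaneously,'' forcing equalities. This conflates two different phenomena. An equality among the $v_k$ is forced only when a cell has four or more \emph{vertices}, whose lifts must be coplanar (as in cases (b), (c), (e), (f), (g)); and even then it may not appear explicitly, e.g.\ in case (a) the central quadrilateral $[1,2,6,7]$ has lifts $v_{12},v_{12},v_{67},v_{67}$, which are coplanar automatically, so no equality occurs in that row. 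By contrast, a lattice point of the support lying in a cell \emph{without being a vertex of it} --- such as $(0,1)$ on the boundary edge of the area-one triangle $[4,5,8]$ in case (h), or $(1,1)$ in the interior of the quadrilateral in case (a) --- only needs its lift to sit \emph{weakly} above the cell's supporting plane; its term need not attain the minimum at the dual vertex at all. This is exactly why rows (a), (d), (h), (i), (j) contain non-strict inequalities ``$\leq$'' rather than equalities, and why several of these regions (e.g.\ (a), (d), (h)) are full-dimensional cones rather than walls of the secondary fan, contrary to your assertion that every non-unimodular subdivision sits on a wall between two unimodular cones. Your rule would replace these ``$\leq$'' conditions by ``$=$'' and hence describe only a lower-dimensional face of the correct parameter set.

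Relatedly, your pruning step omits the decisive verification. The paper's proof of case (h) first derives the \emph{open} cone $-v_{34}+v_5<0$, $-2v_{12}+v_{34}+2v_5-v_8<0$, $v_{34}-2v_{67}+v_8<0$ from strict inequalities only, and then examines each of its three facets separately: on the facets $-v_{34}+v_5=0$ and $-2v_{12}+v_{34}+2v_5-v_8=0$ it exhibits explicit vectors $v$ whose tropical curves are of a different type, while on the facet $v_{34}-2v_{67}+v_8=0$ it recomputes the table of minimizing terms and checks that the curve is unchanged; only then does it adjoin that single facet to obtain $v_{34}-2v_{67}+v_8\leq 0$. Deciding, facet by facet, whether a boundary piece is included ($\leq$), excluded ($<$), or constitutes the entire region ($=$) cannot be read off from the unimodularity of the cells alone; it requires this explicit check, which your proposal does not supply.
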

\begin{proof}[Proof of Theorem \ref{thm-non-smooth}]
We will prove case (h) in detail, the non-smooth tropical curve with a trivalent pentagonal cycle. The collection of vectors $v$ corresponding to it and the collection of vectors $v$ yielding the subdivision in Figure \ref{fig-some-sbdv-cubic-non-smth} are coincide.
\begin{figure}[ht]
\begin{subfigure}[b]{7cm}
\centering
\resizebox{3cm}{!}{ 
\begin{tikzpicture}
\node [pointnode, label={[label distance=-3.5pt]north:\tiny $1$}] (1) at (1,2) {};
\node [pointnode, label={[label distance=-5pt]north east:\tiny $2$}] (2) at (2,1) {};
\node [pointnode, label={[label distance=-5pt]south east:\tiny $3$}] (3) at (2,0) {};
\node [pointnode, label={[label distance=-5pt]north west:\tiny $4$}] (4) at (0,2) {};
\node [pointnode, label={[label distance=-3.5pt]north :\tiny $5$}] (5) at (1,1) {};
\node [pointnode, label={[label distance=-3.5pt]south:\tiny $6$}] (6) at (1,0) {};
\node [pointnode, label={[label distance=-5pt]north west:\tiny $7$}] (7) at (0,1) {};
\node [pointnode, label={[label distance=-5pt]south west:\tiny $8$}] (8) at (0,0) {};
\draw (1) -- (2) -- (3) -- (6) -- (8) -- (7) -- (4) -- (1);
\draw (5) -- (1);
\draw (5) -- (2);
\draw (5) -- (4);
\draw (5) -- (8);
\draw (5) -- (3);
\end{tikzpicture} 
} 
\caption{Subdivision $\mathcal{S}$.}
\label{fig-some-sbdv-cubic-non-smth}
\end{subfigure}
\begin{subfigure}[b]{7cm}
\centering
\resizebox{5cm}{!}{ 
\begin{tikzpicture}
\node [pointnode] (1) at (-1,-3) {};
\node [pointnode] (2) at (1,-3) {};
\node [pointnode] (3) at (2,-2) {};
\node [pointnode] (4) at (0,0) {};
\node [pointnode] (5) at (-1,-1) {};
\draw (1) -- (2) -- (3) -- (4) -- (5) -- (1);
\draw (1) -- ++(-1.5,-1.5);
\draw (2) -- ++(0,-1.5);
\draw (3) -- ++(1.5,0);
\draw (4) -- ++(0,1.5);
\draw (5) -- ++(-1.5,0);
\node at (0,-2) {\footnotesize$v_5 + X + Y$};
\node at (2,0) {\footnotesize $v_8$};
\node at (2,-3) {\footnotesize$v_{34} + 2Y$};
\node at (-1,0.3) {\footnotesize$v_{34} + 2X$};
\node at (-2.3,-2) {\footnotesize $v_{12} + 2X + Y$};
\node at (-0.3,-4) {\footnotesize $v_{12} + X + 2Y$};
\end{tikzpicture}
}
\caption{The dual of $\mathcal{S}$.}\label{tropical_curve_pentagon}
\end{subfigure}
\caption{The subdivision and the tropical curve of case (h).}
\end{figure}
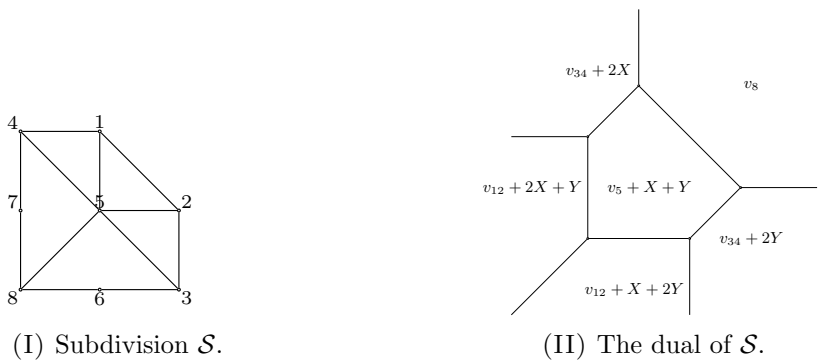
In this case, we have triangle cells that are not primitive. However, notice that a triangle cell is dual to a vertex that separates three areas in the tropical curve. Specifically, the non-smooth tropical curve of case (h) is shown in Figure \ref{tropical_curve_pentagon}. Thus, we always name the cells of a subdivision according to the points of $\Delta_f$ that form vertices of the cell and we have
\begin{equation*}
\mathcal{S} = \{[1,2,5], [1,4,5], [2,3,5], [4,5,8], [3,5,8]\}.
\end{equation*}
\indent Cell $[1,2,5]$ is defined by inequalities (\ref{125}). Cells $[1,4,5]$ and $[2,3,5]$ are defined by inequalities
\begin{align}\label{245}
\begin{split}
-v_{12} + v_{34} + v_5 - v_{67} <& 0 \\
-2v_{12} + v_{34} + 2v_5 - v_8 <& 0
\end{split}
\quad
\begin{split}
-v_{34} + v_5 <& 0 \\
-v_{12} + 2v_5 - v_{67} <& 0.
\end{split}
\end{align}
Lastly, cells [4,5,8] and [3,5,8] are defined by inequalities
\begin{align}\label{458}
\begin{split}
-v_{34} + 2v_5 - 2v_{67} + v_8 <& 0 \\
-2v_{12} - v_{34} + 4v_5 - v_8 <& 0 \\
-2v_{12} + v_{34} + 2v_5 - v_8 <& 0.
\end{split}
\quad
\begin{split}
-v_{34} + v_5 <& 0 \\
v_{34} - 2v_{67} + v_8 <& 0 \\
&
\end{split}
\end{align}
Inequalities (\ref{125}), (\ref{245}), and (\ref{458}) form a polyhedral cone that can be represented by
\begin{equation}\label{polyhedral_pentagon}
-v_{34} + v_5 < 0 \qquad  
-2v_{12} + v_{34} + 2v_5 - v_8 < 0 \qquad  
v_{34} -2v_{67} + v_8 < 0.
\end{equation}
\indent Next, we have to determine the extreme rays of cone (\ref{polyhedral_pentagon}). We will do this by evaluating the following three polyhedral cones with lower dimensions.
\begin{equation}\label{pentagon1}
-v_{34} + v_5 = 0 \qquad  
-2v_{12} + v_{34} + 2v_5 - v_8 < 0 \qquad  
v_{34} -2v_{67} + v_8 < 0 
\end{equation}
\begin{equation}\label{pentagon2}
-v_{34} + v_5 < 0 \qquad  
-2v_{12} + v_{34} + 2v_5 - v_8 = 0 \qquad  
v_{34} -2v_{67} + v_8 < 0
\end{equation}
\begin{equation}\label{pentagon3}
-v_{34} + v_5 < 0 \qquad  
-2v_{12} + v_{34} + 2v_5 - v_8 < 0 \qquad  
v_{34} -2v_{67} + v_8 = 0 
\end{equation}
\indent Cones (\ref{pentagon1}) and (\ref{pentagon2}) have coordinates $v = (1,-1,-1,1,-1)$ and $v = (-2,0,-3,2,-2)$, respectively, that correspond to tropical curves differ from Figure \ref{tropical_curve_pentagon}. Meanwhile, let
$$
v_8 = -v_{34} + 2v_{67}
$$
as mentioned in cone (\ref{pentagon3}) and substitute to (\ref{eq-our-trop-f-sect3}) to have
\begin{align*}
\trop(f)(X, Y) = \min (& v_{12} + X + 2Y, v_{12} + 2X + Y, v_{34} + 2X, v_{34} + 2Y, v_5 + X + Y, \\
& v_{67} + X, v_{67} + Y, -v_{34} + 2v_{67}).
\end{align*}
The relations between each cell, its dual coordinate $(X,Y)$, and the value of $\trop(f)(X,Y)$ are as shown in Table \ref{tab-XYcoord-pentagon}.
\begin{table}[ht]
\centering
\begin{tabular}{lll}
Cell & Corresponding $(X,Y)$ & Terms of $\trop(f)(X,Y)$ with minimum value \\ \hline
$[1,2,5]$ & $(-v_{12}+v_{5},-v_{12}+v_{5})$ & $1^{\text{st}}, 2^{\text{nd}}, 5^{\text{th}}$ \\ \hline
$[1,4,5]$ & $(-v_{12}+v_{34},-v_{12}+v_{5})$ & $1^{\text{st}}, 3^{\text{rd}}, 5^{\text{th}}$ \\ \hline
$[2,3,5]$ & $(-v_{12}+v_{5},-v_{12}+v_{34})$ & $2^{\text{nd}}, 4^{\text{th}}, 5^{\text{th}}$ \\ \hline
$[4,5,8]$ & $(-v_{5} + v_{67},-v_{34}+v_{67})$ & $3^{\text{rd}}, 5^{\text{th}}, 6^{\text{th}}, 8^{\text{th}}$ \\ \hline
$[3,5,8]$ & $(-v_{34}+v_{67},-v_{5} + v_{67})$ & $4^{\text{th}}, 5^{\text{th}}, 7^{\text{th}}, 8^{\text{th}}$ \\ \hline
\end{tabular}
\caption{Coordinates of Figure \ref{tropical_curve_pentagon}.}\label{tab-XYcoord-pentagon}
\end{table}
\noindent In each cell, we see that the terms of $\trop(f)$ that determine the value of $\trop(f)(X,Y)$ do not exceed the points of $\Delta_f$ that are covered by the cell. Thus, we have the subdivision in case (h) occurs if and only if
\begin{equation*}
-v_{34} + v_5 < 0 \quad  
-2v_{12} + v_{34} + 2v_5 - v_8 < 0 \quad  
v_{34} -2v_{67} + v_8 \leq 0
\end{equation*}
hold.
\end{proof}
\section{Applications}\label{sect-app}
\subsection{Symmetric Honeycomb}\label{sect-sym-hc}
Chan-Sturmfels \cite{chan13} considered a cubic in the form of
$$
g(x,y)  =  a(x^3  + y^3  + 1) + b(x^2y  + x^2  + xy^2  + x + y^2  + y ) + xy,
$$
and showed that $C(\trop(g))$ is a symmetric honeycomb if and only if $\val(a) > 2\val(b) > 0$. Here $C(\trop(g))$ is called in honeycomb form if it contains a trivalent hexagonal cycle. Moreover, a tropical curve in honeycomb form is called {\it symmetric} when the lattice lengths of the six edges of the hexagon are equal, and the lattice lengths of the three bounded edges emerging from the hexagon are also equal. In this subsection, we examine our truncated cubic
\begin{align}\label{eq-our-truncated-f-sect5}
f(x,y) = c_{12} (xy^2 + x^2y) + c_{34} (x^2 + y^2) + c_5 xy + c_{67} (x + y) + c_8
\end{align}
and investigate analogous conditions for $C(\trop(f))$.
\begin{Corollary}\label{cor-hc}
The tropical curve of $\trop(f)$ is in honeycomb form if and only if
$$
-v_5 + 2 v_{67} - v_8 < 0 \qquad  
-v_{34} + v_5 < 0 \qquad  
-v_{12} + v_{34} + v_5 - v_{67} < 0.
$$
\end{Corollary}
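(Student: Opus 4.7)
The plan is to reduce Corollary \ref{cor-hc} directly to case (d) of Theorem \ref{thm-smooth}. A tropical curve is in honeycomb form precisely when it contains a trivalent hexagonal cycle. By tropical duality, a bounded cycle in $C(\trop(f))$ bounds a region dual to an interior lattice point of $\Delta_f$; since $(1,1)$ is the unique interior lattice point in $\Supp_f$, the hexagonal cycle must enclose the region dual to point $5$. Its six vertices therefore correspond to six distinct cells of $\subdiv_v$ sharing vertex $5$, and trivalence at each of these vertices forces every such cell to be a triangle.

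Next I would show that, among all symmetric subdivisions (those invariant under the reflection $(i,j)\leftrightarrow(j,i)$), the only regular subdivision of $\Delta_f$ in which vertex $5$ is surrounded by six triangles is the one in Figure \ref{fig-sbdv-smth}(d). The first ingredient is that a $6$-triangle fan around vertex $5$ uses exactly six of the seven boundary points $\{1,2,3,4,6,7,8\}$ as the far vertices of its triangles, so exactly one boundary point is excluded from the fan. The second ingredient is that the valuation vector $(v_{12},v_{34},v_5,v_{67},v_8)$ induces the symmetric assignments $v_1=v_2$, $v_3=v_4$, $v_6=v_7$, which forces the upper convex hull of the lifted points $(i,j,\val(a_{ij}))$ and hence $\subdiv_v$ to be invariant under $(i,j)\leftrightarrow(j,i)$. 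Consequently the excluded boundary point must itself lie on the axis $y=x$. Among the boundary points of $\Delta_f$ only $8=(0,0)$ satisfies this, pinning the subdivision down uniquely as the hexagonal one, with the corner triangle $[6,7,8]$ completing $\Delta_f$.

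Once the honeycomb condition has been identified with Figure \ref{fig-sbdv-smth}(d), the three inequalities claimed in the corollary are read off verbatim from case (d) of Theorem \ref{thm-smooth}. I expect the main substantive step to be the uniqueness-by-symmetry argument above, in particular ruling out a hypothetical asymmetric fan omitting a non-diagonal boundary point (say, using the triangle $[4,2,5]$ on one side only). This is the only step not already contained in Theorem \ref{thm-smooth}; the rest is a direct citation.
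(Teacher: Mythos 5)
Your proposal is correct and follows essentially the same route as the paper: identify ``honeycomb form'' with the dual subdivision having a six-triangle fan around the interior point $(1,1)$, i.e.\ the subdivision of Figure \ref{fig:smooth_hexagon}, and then read the inequalities off from case (d) of Theorem \ref{thm-smooth}. The paper states this identification in one line, whereas you additionally justify the uniqueness of the subdivision via the symmetry of the lifted points (forcing the single omitted boundary vertex to be $(0,0)$); that extra argument is sound and merely makes explicit what the paper leaves implicit.
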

\begin{proof}
Tropical curve $C(\trop(f))$ contains a trivalent hexagonal cycle if and only if its dual is a regular subdivision containing cells $\{[1,2,5],[1,4,5],[2,3,5],[4,5,7],[3,5,6],[5,6,7]\}$. Thus, this is the case (d) of Table \ref{characterization_table}.
\end{proof}
\begin{table}[ht]
\centering
\begin{tabular}{m{0.5cm}m{6cm}m{3.5cm}m{3.5cm}}
& Edges emenating from the hexagon & Subdivision & Tropical curve \\ \hline
(a) & Five rays and one bounded edge &
\resizebox{0.2\textwidth}{!}{ 
\begin{tikzpicture}
\node [pointnode, label={[label distance=-3.5pt]north:\tiny $1$}] (1) at (1,2) {};
\node [pointnode, label={[label distance=-5pt]north east:\tiny $2$}] (2) at (2,1) {};
\node [pointnode, label={[label distance=-5pt]south east:\tiny $3$}] (3) at (2,0) {};
\node [pointnode, label={[label distance=-5pt]north west:\tiny $4$}] (4) at (0,2) {};
\node [pointnode, label={[label distance=-3.5pt]north :\tiny $5$}] (5) at (1,1) {};
\node [pointnode, label={[label distance=-3.5pt]south:\tiny $6$}] (6) at (1,0) {};
\node [pointnode, label={[label distance=-5pt]north west:\tiny $7$}] (7) at (0,1) {};
\node [pointnode, label={[label distance=-5pt]south west:\tiny $8$}] (8) at (0,0) {};
\draw (1) -- node [right] {\tiny$E_1$} (2) -- node [right] {\tiny$E_2$} (3) -- node [below] {\tiny$E_3$} (6) -- node [below] {\tiny$E_4$} (7) -- node [left] {\tiny$E_5$} (4) -- node [above] {\tiny$E_6$} (1);
\draw (5) -- (1);
\draw (5) -- (2);
\draw (5) -- (4);
\draw (5) -- (7);
\draw (5) -- (6);
\draw (5) -- (3);
\draw (6) -- (8) -- (7);
\end{tikzpicture} 
} &
\resizebox{0.2\textwidth}{!}{ 
\begin{tikzpicture}
\node[pointnode] (1) at (0,0) {};
\node[pointnode] (2) at (1,0) {};
\node[pointnode] (3) at (2,1) {};
\node[pointnode] (4) at (2,2) {};
\node[pointnode] (5) at (1,2) {};
\node[pointnode] (6) at (0,1) {};
\draw [white] (-1.2,-1.2) -- (4.2,4.2);
\draw (1) -- (2) -- (3) -- (4) -- (5) -- (6) -- (1);
\draw (1) -- node[left] {$e_1$} ++(-1,-1);
\draw (2) -- node[right] {$e_6$} ++(0,-1);
\draw (3) -- node[below] {$e_5$} ++(2,0);
\draw (5) -- node[right] {$e_3$} ++(0,2);
\draw (6) -- node[above] {$e_2$} ++(-1,0);
\node[pointnode] (7) at (3,3) {};
\draw (4) -- node[right] {$e_4$ (tail)} (7);
\draw (7) -- ++(1,0);
\draw (7) -- ++(0,1);
\end{tikzpicture} 
} \\  \hline
(b) & Six rays &
\resizebox{0.2\textwidth}{!}{ 
\begin{tikzpicture}
\node [pointnode, label={[label distance=-3.5pt]north:\tiny $1$}] (1) at (1,2) {};
\node [pointnode, label={[label distance=-5pt]north east:\tiny $2$}] (2) at (2,1) {};
\node [pointnode, label={[label distance=-5pt]south east:\tiny $3$}] (3) at (2,0) {};
\node [pointnode, label={[label distance=-5pt]north west:\tiny $4$}] (4) at (0,2) {};
\node [pointnode, label={[label distance=-3.5pt]north :\tiny $5$}] (5) at (1,1) {};
\node [pointnode, label={[label distance=-3.5pt]south:\tiny $6$}] (6) at (1,0) {};
\node [pointnode, label={[label distance=-5pt]north west:\tiny $7$}] (7) at (0,1) {};
\draw (1) -- node [right] {\tiny$E_1$} (2) -- node [right] {\tiny$E_2$} (3) -- node [below] {\tiny$E_3$} (6) -- node [below] {\tiny$E_4$} (7) -- node [left] {\tiny$E_5$} (4) -- node [above] {\tiny$E_6$} (1);
\draw (5) -- (1);
\draw (5) -- (2);
\draw (5) -- (4);
\draw (5) -- (7);
\draw (5) -- (6);
\draw (5) -- (3);
\end{tikzpicture} 
} &
\resizebox{0.2\textwidth}{!}{ 
\begin{tikzpicture}
\node[pointnode] (1) at (0,0) {};
\node[pointnode] (2) at (1,0) {};
\node[pointnode] (3) at (2,1) {};
\node[pointnode] (4) at (2,2) {};
\node[pointnode] (5) at (1,2) {};
\node[pointnode] (6) at (0,1) {};
\draw [white] (-1.2,-1.2) -- (3.2,3.2);
\draw (1) -- (2) -- (3) -- (4) -- (5) -- (6) -- (1);
\draw (1) -- node[left] {$e_1$} ++(-1,-1);
\draw (2) -- node[right] {$e_6$} ++(0,-1);
\draw (3) -- node[below] {$e_5$} ++(1,0);
\draw (5) -- node[right] {$e_3$} ++(0,1);
\draw (6) -- node[above] {$e_2$} ++(-1,0);
\draw (4) -- node[right] {$e_4$} ++(1,1);
\end{tikzpicture} 
} \\ \hline
\end{tabular}
\caption{Two types of truncated honeycomb.}
\label{table-trunc-hnycomb}
\end{table}
\begin{Proposition}[Two types of truncated honeycomb]\label{prop-two-types-trunc-hc}
Let $f$ be as defined in \textup{(\ref{eq-our-truncated-f-sect5})}, and suppose the conditions outlined in Corollary \ref{cor-hc} are satisfied by $\trop(f)$. In this case, the six edges emanating from the hexagonal cycle can be classified as either:
\begin{enumerate}[label=(\alph*)]
\item five rays and one bounded edge (called the tail), or
\item six rays,
\end{enumerate}
as illustrated in \textup{Table \ref{table-trunc-hnycomb}}. The cases (a),(b) occur according to whether $c_8 \neq 0$, $c_8=0$, respectively.
\end{Proposition}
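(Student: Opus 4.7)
The plan is to exploit the duality between edges of the tropical curve and edges of the regular subdivision of $\Delta_f$, examining how the status of the lattice point $(0,0)$ changes the shape of the Newton polygon and hence the boundary/interior classification of the edge $[6,7]$. By Corollary \ref{cor-hc}, the given inequalities force the tropical curve to contain a hexagonal cycle, whose dual consists of the six triangular cells $[1,2,5],[1,4,5],[2,3,5],[4,5,7],[3,5,6],[5,6,7]$ surrounding the interior point $5=(1,1)$. The six edges emanating from the hexagon are then dual to the six edges of the subdivision emerging outward from this central cluster, namely $[1,2],[2,3],[3,6],[6,7],[4,7],[1,4]$. Whether each such edge is a bounded edge or an unbounded ray of $C(\trop(f))$ is determined by whether it lies in the interior of the subdivision (shared by two cells) or on the boundary of the Newton polygon.

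For case (a), when $c_8 \neq 0$, we have $(0,0) \in \Supp_f$ and $\Delta_f$ is the heptagon of Figure \ref{newton_polygon_psi}. Under the inequalities of Corollary \ref{cor-hc}, the subdivision is the full unimodular one of Figure \ref{fig:smooth_hexagon}, which additionally contains the cell $[6,7,8]$. Hence $[6,7]$ is an interior edge shared between $[5,6,7]$ and $[6,7,8]$, and its dual is the bounded edge (the tail) connecting the hexagon to the trivalent vertex $(-v_{67}+v_8,-v_{67}+v_8)$ dual to $[6,7,8]$ (cf. Table \ref{tab-XYcoord-hexagon}). The remaining five outward edges $[1,2],[2,3],[3,6],[4,7],[1,4]$ lie on $\partial \Delta_f$, hence are dual to rays.

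For case (b), when $c_8 = 0$, the support becomes $\Supp_f \setminus \{(0,0)\}$ and the new Newton polygon $\Delta_f$ is the hexagon with vertices $\{(1,0),(2,0),(2,1),(1,2),(0,2),(0,1)\}$. In particular, $[6,7]$ (the segment from $(0,1)$ to $(1,0)$) now lies on $\partial \Delta_f$. The two honeycomb conditions not involving $v_8$, namely $-v_{34}+v_5<0$ and $-v_{12}+v_{34}+v_5-v_{67}<0$, continue to ensure that the regular subdivision of this smaller polygon consists precisely of the six central triangles above; the inequality $-v_5+2v_{67}-v_8<0$ is automatically satisfied as $v_8=+\infty$. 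All six outward edges are therefore boundary edges of $\Delta_f$ and their duals are the six rays of $C(\trop(f))$.

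The main conceptual step — and the only place where care is needed — is to justify that dropping the term $v_8$ from $\trop(f)$ genuinely produces a hexagonal Newton polygon whose regular subdivision is still determined by the reduced inequalities, and that no new cell or coarsening appears along the now-boundary edge $[6,7]$. This follows directly from the fact that the cells $[1,2,5],\ldots,[5,6,7]$ are already unimodular triangles and their defining inequalities (a subset of those computed in the proof of Theorem \ref{thm-smooth}(d)) do not involve $v_8$, so the subdivision in case (b) is just the restriction of the case (a) subdivision to the hexagonal $\Delta_f$.
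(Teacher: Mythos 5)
Your proof is correct and follows essentially the same route as the paper: both identify the six emanating edges as duals of the outer edges $E_1,\dots,E_6$ of the central cluster around $(1,1)$, observe that five of these always lie on $\partial\Delta_f$ and hence are dual to rays, and note that $[6,7]$ is an interior edge (dual to a bounded tail) precisely when $c_8\neq 0$ puts $(0,0)$ in the support, and a boundary edge (dual to a ray) when $c_8=0$. Your added justification that the reduced inequalities still determine the subdivision of the smaller hexagonal Newton polygon is a welcome extra detail, but not a different argument.
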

\begin{proof}
The six edges emenating from the hexagonal cycle are the duals of edges $E_1, \dots, E_6$ of the subdivisions on Table \ref{table-trunc-hnycomb}. For $i = 1,2,3,5,6$, the dual of edges $E_i$ are the rays $e_i$ since $E_i$ are parts of the border of $\Delta_f$. When $c_8 \neq 0$, the Newton polygon $\Delta_f$ takes the form shown in case (a). In this scenario, edge $E_4$ does not lie on the border of $\Delta_f$, resulting in its dual edge, $e_4$, being a bounded edge. If $c_8 = 0$, $\Delta_f$ exhibits the shape depicted in case (b). In this case, edge $E_4$ is part of the border of $\Delta_f$, causing $e_4$ to form a ray.
\end{proof}
We shall say a truncated honeycomb $C(\trop(f))$ to be {\it quasi-symmetric} if the six sides of the hexagon have the same lattice length. A quasi-symmetric truncated honeycomb is {\it symmetric} (following the definition in \cite{chan13}) if and only if the hexagon has six emanating rays and does not possess a tail, that is of type (b) of Proposition \ref{prop-two-types-trunc-hc}.
\begin{Proposition}[Quasi-symmetric truncated honeycombs]\label{prop-chan-trunc}
Let $f$ be as in \textup{(\ref{eq-our-truncated-f-sect5})} and suppose $C(\trop(f))$ is a truncated honeycomb. Then $C(\trop(f))$ is quasi-symmetric if and only if
$$
2 v_{34} = v_{12} + v_{67} \text{ and } -v_5 + 2v_{67} < v_8.
$$
The lattice length of the hexagon's side is $\vert v_{34} - v_5 \vert$ and the tail is equal to $\vert v_5 - 2 v_{67} + v_8 \vert$. In particular, $C(\trop(f))$ is symmetric
if and only if
$$
2 v_{34} = v_{12} + v_{67} \text{ and } v_8 =\infty.
$$
\end{Proposition}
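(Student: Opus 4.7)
The plan is to directly compute the lattice lengths of the six sides of the hexagonal cycle and of the tail from the vertex coordinates already tabulated in Table~\ref{tab-XYcoord-hexagon}, and then impose the quasi-symmetry equation.

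First, I would read off the six hexagon vertices from the table and compute side lengths. Four of the six sides of the hexagon are axis-parallel (dual to the subdivision edges joining the central lattice point $5$ to the corner-adjacent points $1$, $2$, $6$, $7$), and each has lattice length $|v_{34}-v_5|$. The remaining two sides are dual to the edges from $5$ to $3$ and from $5$ to $4$; their endpoint differences turn out to have equal $X$- and $Y$-components, so they are diagonal of primitive direction $(1,1)$ and each has lattice length $|v_{12}+v_{67}-v_{34}-v_5|$. The honeycomb conditions from Corollary~\ref{cor-hc} force both quantities to be strictly positive, so setting the two lengths equal collapses to the linear equation $2v_{34}=v_{12}+v_{67}$, which is the quasi-symmetric condition.

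Second, for the tail I would read off from Table~\ref{tab-XYcoord-hexagon} that its endpoints are $(-v_5+v_{67},-v_5+v_{67})$ and $(-v_{67}+v_8,-v_{67}+v_8)$, giving direction $(1,1)$ and lattice length $|v_5-2v_{67}+v_8|$. The remaining honeycomb inequality of Corollary~\ref{cor-hc} forces $v_5-2v_{67}+v_8>0$, which is precisely the second condition $-v_5+2v_{67}<v_8$ of the statement; in the degenerate case $v_8=\infty$ the inequality holds vacuously, and the tail is replaced by a ray.

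Finally, to pass from quasi-symmetric to symmetric I would invoke Proposition~\ref{prop-two-types-trunc-hc}: the hexagonal cycle has no tail (six emanating rays, type (b)) if and only if $c_8=0$, equivalently $v_8=\infty$. Combining this with the quasi-symmetric equation yields the stated symmetric characterization. I do not expect any serious obstacle; the whole argument is a mechanical bookkeeping exercise based on Table~\ref{tab-XYcoord-hexagon}, the only subtle point being the use of the strict honeycomb inequalities to remove absolute values.
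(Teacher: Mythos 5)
Your proposal is correct and follows essentially the same route as the paper: both read the hexagon's vertex coordinates from Table~\ref{tab-XYcoord-hexagon}, find that the sides have lattice lengths $\vert v_{34}-v_5\vert$ and $\vert v_{12}-v_{34}-v_5+v_{67}\vert$ and the tail $\vert v_5-2v_{67}+v_8\vert$, use the strict inequalities of Corollary~\ref{cor-hc} to drop the absolute values and obtain $2v_{34}=v_{12}+v_{67}$, and reduce the symmetric case to the absence of the tail, i.e.\ $v_8=\infty$. The only cosmetic difference is that you verify all six sides explicitly while the paper checks three representatives, and you cite Proposition~\ref{prop-two-types-trunc-hc} for the symmetric case where the paper argues directly that infinite tail length forces $v_8=\infty$.
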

\begin{proof}
From case (a) of Table \ref{table-trunc-hnycomb}, a truncated honeycomb tropical curve $C(\trop(f))$ is quasi-symmetric if and only if the lattice lengths of the edges dual to $[5,1]$, $[5,4]$, and $[5,7]$ are equal. The lattice length can be determined by the differences of coordinates $X$ or $Y$. From Table \ref{tab-XYcoord-hexagon} in the proof of Theorem \ref{thm-smooth}, we have Table \ref{table-lat-length} that implies the six edges of the hexagon are equal if and only if
$
\vert v_{34} - v_5 \vert = \vert v_{12} - v_{34} - v_5 + v_{67} \vert.
$
From the last two inequalities of Corollary \ref{cor-hc}, we have
$
v_{34} - v_5 = v_{12} - v_{34} - v_5 + v_{67}
$
, thus $2 v_{34} = v_{12} + v_{67}$. Together with the first inequality of Corollary \ref{cor-hc}, the result follows. Thus, the lattice length of the hexagon's side is $\vert v_{34} - v_5 \vert$, while the tail is the dual of edge $[6,7]$ whose lattice length is $\vert v_5 - 2v_{67} + v_8 \vert$.
\begin{table}[ht]
\centering
\setlength{\extrarowheight}{0.05cm}
\begin{tabular}{cc}
Edges of $\Delta_f$ & The lattice length of its duals \\ \hline
$[5,1]$ & $\vert v_{34} - v_5 \vert$ \\[0.05cm] \hline
$[5,4]$ & $\vert v_{12} - v_{34} - v_5 + v_{67} \vert$ \\[0.05cm] \hline
$[5,7]$ & $\vert v_{34} - v_5 \vert$ \\[0.05cm] \hline 
$[6,7]$ & $\vert v_5 - 2v_{67} + v_8 \vert$ \\[0.05cm] \hline 
\end{tabular}
\caption{The lattice length of some edges on $C(\trop(f))$}\label{table-lat-length}
\end{table}
\\ \indent Meanwhile, truncated honeycomb $C(\trop(f))$ is symmetric if and only if the dual of edge $[6,7]$ has infinite lattice length. That is $\vert v_5 - 2v_{67} + v_8 \vert  = v_5 - 2v_{67} + v_8 = \infty$. Hence, $v_5 = \infty$ or $v_8 = \infty$. If $v_5 = \infty$, the edges $[5,i]$, where $i = 1,2,3,4,6,7$, of the regular subdivisions on Table \ref{table-trunc-hnycomb} do not exist. Thus, $v_8 = \infty$.
\end{proof}
\begin{Example}\label{exmpl-chan-hc}
Let $(v_{12}, v_{34}, v_5, v_{67}) = (3, 2, 0, 1)$ such that $v_{12} \neq v_{34} \neq v_{67}$. If $v_8 = 3$,  $C(\trop(f))$ is a quasi-symmetric truncated honeycomb where the hexagon's sides have length 2 and the tail has length 1 as shown on Figure \ref{fig:f-quasi-hc}. If $v_8 = \infty$, tropical curve $C(\trop(f))$ is a symmetric truncated honeycomb as illustrated in Figure \ref{fig:f-sym-hc}.
\begin{figure}[ht]
\begin{subfigure}[b]{7cm}
\centering
\resizebox{3cm}{!}{ 
\begin{tikzpicture}
\node[pointnode] (1) at (0,0) {};
\node[pointnode] (2) at (2,0) {};
\node[pointnode] (3) at (4,2) {};
\node[pointnode] (4) at (4,4) {};
\node[pointnode] (5) at (2,4) {};
\node[pointnode] (6) at (0,2) {};
\draw (1) -- (2) -- (3) -- (4) -- (5) -- (6) -- (1);
\draw (1) -- ++(-1,-1);
\draw (2) -- ++(0,-1);
\draw (3) -- ++(2,0);
\draw (4) -- ++(1,1);
\draw (5,5) -- ++(1,0);
\draw (5,5) -- ++(0,1);
\draw (5) -- ++(0,2);
\draw (6) -- ++(-1,0);
\end{tikzpicture} 
}
\caption{$(v_{12}, v_{34}, v_5, v_{67}, v_8) = (3,2,0,1,3)$}
\label{fig:f-quasi-hc}
\end{subfigure}  
\begin{subfigure}[b]{7cm}
\centering
\resizebox{3cm}{!}{ 
\begin{tikzpicture}
\node[pointnode] (1) at (0,0) {};
\node[pointnode] (2) at (2,0) {};
\node[pointnode] (3) at (4,2) {};
\node[pointnode] (4) at (4,4) {};
\node[pointnode] (5) at (2,4) {};
\node[pointnode] (6) at (0,2) {};
\draw (1) -- (2) -- (3) -- (4) -- (5) -- (6) -- (1);
\draw (1) -- ++(-1,-1);
\draw (2) -- ++(0,-1);
\draw (3) -- ++(2,0);
\draw (4) -- ++(2,2);
\draw (5) -- ++(0,2);
\draw (6) -- ++(-1,0);
\end{tikzpicture} 
}
\caption{$(v_{12}, v_{34}, v_5, v_{67}, v_8) = (3,2,0,1,\infty)$}
\label{fig:f-sym-hc}
\end{subfigure}  
\caption{Quasi-symmetric and symmetric truncated honeycombs in Example \ref{exmpl-chan-hc}.}
\label{fig-f-hc}
\end{figure}
\end{Example}
\subsection{Nobe's one-parameter family $f_k$}\label{sect-nobe-fk}
In \cite{nobe2008}, Nobe studied a certain piecewise linear dynamical system called the ultradiscrete QRT map and found its invariant curve to be identified with the cycle of a tropical elliptic curve. Fix $(v_{12}, v_{34}, c_{67}, v_8) \in \mathbb{R}^4$ and consider a one-parameter family of tropical curves $\{C(\trop(f_k))\}_{k \in \mathbb{R}}$ with
\begin{align*}
\trop(f_k)(X,Y) = \min( & k + X + Y, v_{12} + X + 2Y, v_{12} + 2X + Y, \\
& v_{34} + 2X, v_{34} + 2Y, v_{67} + X, v_{67} + Y, v_8).
\end{align*}
\noindent According to \cite{nobe2008} Lemma 1, there is a one-parameter family of ultradiscrete QRT maps whose invariant curve $I_k$ coincides with the cycle part of $C(\trop(f_k))$ for each $k \in \mathbb{R}$.
\begin{Example}[Nobe, \protect{\cite[Example 1]{nobe2008}}]
Since we are dealing with operations $(+, \min)$ while \cite{nobe2008} works with $(+, \max)$, substitute negative values of Nobe's parameters as follows, (See Remark \ref{max-min}), 
$$
v_{12} = -10 \quad v_{34} = 0 \quad v_{67} = -5 \qquad v_8 = 0.
$$
\noindent The invariant curves $I_k (k \in \mathbb{R})$ are classified into heptagons, pentagons, squares or (degeneration to) a point respectively for
$$
k \in (-\infty, -15), [-15, -10), [-10, -7.5), [-7.5, \infty).
$$
\noindent If $k$ is in $(-\infty, -15), (-15, -10), (-10, -7.5)$, then $C(\trop(f_k))$ is smooth according to Theorem \ref{thm-smooth} (e), (c), (b), respectively. If $k = -15, -10$, and $k \geq -7.5$, then $C(\trop(f_k))$ is non-smooth and the case corresponds to Theorem \ref{thm-non-smooth} (c), (b), (a), respectively.
\end{Example}
\begin{Example}\label{exmpl-nobe-triangle}
Let us present the case $(v_{12}, v_{34}, v_{67}, v_8) = (0,14,4,0)$. According to Theorem \ref{thm-smooth} (e), the cycle part of $C(\trop(f_k))$ forms a heptagon for $k < -10$. Similarly, Theorem \ref{thm-smooth} (c) and Theorem \ref{thm-non-smooth} (c) reveal that the cycle part becomes a pentagon for $-10 \leq k < -4$. Moving further, for the range $-4 \leq k < 0$, Theorem \ref{thm-smooth} (a) and Theorem \ref{thm-non-smooth} (e) indicate that the cycle part takes the shape of a triangle. Notably, Theorem \ref{thm-non-smooth} (d) states that when $k \geq 0$, the tropical curve $C(\trop(f_k))$ does not contain any cycle. See Figure \ref{fig-f-k}.
\begin{figure}[ht]
\centering
\resizebox{0.3\textwidth}{!}{
\begin{tikzpicture}
\node[pointnode] (0) at (0,0) {};
\node[pointnode] (1) at (-5,10) {};
\node[pointnode] (2) at (10,-5) {};
\node[pointnode] (3) at (-10,15) {};
\node[pointnode] (4) at (15,-10) {};
\draw [dashed] (0) -- ++(-15,-15);
\draw [dashed] (3) -- (1) -- (0) -- (2) -- (4);
\draw [dashed] (3) -- ++(0,5);
\draw [dashed] (3) -- ++(-5,0);
\draw [dashed] (4) -- ++(0,-5);
\draw [dashed] (4) -- ++(5,0);
\draw [dashed] (1) -- ++(0,10);
\draw [dashed] (2) -- ++(10,0);
\draw [line width=3pt] (-2.5,5) -- (5,-2.5) -- node [below] {\HUGE$f_{-1}$} (-2.5,-2.5) -- cycle;
\draw [line width=3pt] (-7.5,12.5) -- (-5,12.5) -- (12.5,-5) -- (12.5,-7.5) -- node [below] {\HUGE$f_{-5}$} (-7.5,-7.5) -- cycle;
\draw [line width=3pt] (-12.5,15) -- (-10,17.5) -- (-5,17.5) -- (17.5,-5) -- (17.5,-10) -- (15,-12.5) -- node [below] {\HUGE$f_{-11}$} (-12.5,-12.5) -- cycle;
\end{tikzpicture}
}
\caption{$C(\trop(f_k))$ for $k = -1, -5, -11$ in Example \ref{exmpl-nobe-triangle}.}
\label{fig-f-k}
\end{figure}
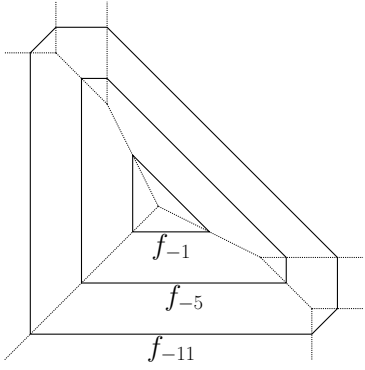
\end{Example}
\subsection{Unimodular transformation}\label{sect-unimod}
Let
$$
g(x,y) = x^2 y^2 \cdot f \left( \frac{1}{x}, \frac{1}{y} \right)
$$
be the result of an integral unimodular transformation on $f$ and we have
$$
g(x,y) = c_{12} (x + y) + c_{34} (x^2 + y^2) + c_5 xy + c_{67} (x^2 y + y^2 x) + c_8 x^2 y^2.
$$
The tropicalization of $g$ is
\begin{align*}
\trop(g)(X,Y) = \min( & v_{12} + X, v_{12} + Y, v_{34} + 2X, v_{34} + 2Y, v_5 + X + Y, \\
& v_{67} + 2X + Y, v_{67} + X + 2Y, v_8 + 2X + 2Y).
\end{align*}
In this section we will show that the conditions of Theorem \ref{thm-smooth} and Theorem \ref{thm-non-smooth} are invariant after such transformation. 
\begin{Lemma}\label{lemma-uni}
Let $f$ and $g$ be the Laurent polynomials that are defined above. Then 
$$
C(\trop(f)) = -1 \cdot C(\trop(g))
$$
holds for the same collection of coefficients $(v_{ij})$.
\end{Lemma}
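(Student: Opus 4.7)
The plan is to trace the algebraic operation $g(x,y) = x^2y^2 \cdot f(1/x, 1/y)$ through the tropicalization and show that it produces the piecewise linear function $\trop(f)(-X,-Y)+2X+2Y$, from which the result on singular loci is immediate.

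First I would expand $x^2 y^2 f(1/x, 1/y)$ term by term to recover the formula for $g(x,y)$ printed before the lemma, verifying in particular that the exponent data of the eight monomials of $g$ are exactly the reflections of those of $f$ across the centre $(1,1)$ of $\Delta_f$. Tropically this reflection corresponds to the substitution $(X,Y)\mapsto(-X,-Y)$, while the overall factor $x^2y^2$ adds $2X+2Y$ to the tropical polynomial. A line by line check on the eight monomials then yields
\begin{equation*}
\trop(g)(X,Y) \;=\; \trop(f)(-X,-Y) + 2X + 2Y .
\end{equation*}

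Next I would invoke the definition of the tropical curve as the non-differentiable locus. Since $(X,Y)\mapsto 2X+2Y$ is a smooth (in fact linear) function on all of $\mathbb{R}^2$, adding it to $\trop(f)(-X,-Y)$ does not affect the set of points where the latter fails to be differentiable. Therefore $(X,Y)\in C(\trop(g))$ if and only if $\trop(f)$ is non-differentiable at $(-X,-Y)$, that is, if and only if $(-X,-Y)\in C(\trop(f))$. This gives $C(\trop(g)) = -C(\trop(f))$, equivalently $C(\trop(f)) = -C(\trop(g))$, as desired; the argument uses the same collection of coefficients $(v_{ij})$ throughout because the valuations of corresponding monomials in $f$ and $g$ are identical.

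There is no real obstacle here; the only place to be careful is bookkeeping the monomials, to confirm that $c_{12}(xy^2+x^2y)$ in $f$ becomes $c_{12}(x+y)$ in $g$ (and vice versa for $c_{67}$), that $c_{34}(x^2+y^2)$ and $c_5xy$ are fixed up to the $x^2y^2$ factor, and that the constant $c_8$ becomes $c_8 x^2y^2$. Once these identifications are in place, the tropical identity $\trop(g)(X,Y)=\trop(f)(-X,-Y)+2X+2Y$ holds on the nose and the conclusion follows.
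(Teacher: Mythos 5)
Your argument is correct and is essentially the paper's own proof: both derive the identity $\trop(g)(X,Y)=\trop(f)(-X,-Y)+2X+2Y$ from the substitution and the $x^2y^2$ factor, then note that adding the everywhere-differentiable linear term $2X+2Y$ leaves the singular locus unchanged, so $C(\trop(g))=-1\cdot C(\trop(f))$. Your extra monomial-by-monomial bookkeeping is a harmless elaboration of the same step.
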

\begin{proof}
From the tropicalization of
$$
g(x,y) = f \left( \frac{1}{x}, \frac{1}{y} \right) \cdot x^2 y^2,
$$
we have
$$
\trop(g)(X,Y) = \trop(f)(-X, -Y) + 2X + 2Y.
$$
Since $2X + 2Y$ does not exhibit any singularities for all $(X,Y)$, we have $(X,Y)$ is a point on $C(\trop(g))$ if and only if $(-X, -Y)$ is a point on $C(\trop(f))$.
In other words, 
$$
C(\trop(g))(X,Y) = -1 \cdot C(\trop(f))(X,Y)
$$
holds.
\end{proof}
\subsection{Two-parameter family of Edwards curves $f_{r,s}$}\label{sect-edwards}
Let $\mathbb{K}$ be a valuated field, $q \in \mathbb{K}$ with positive valuation, and
$$
\al=\prod_{n=1}^\infty (1+q^n) \quad \text{ and } \quad  
\bal=\prod_{n=1}^\infty (1+(-q)^n).
$$
In \cite{nak23}, we demonstrated that the polynomial
$$
f_{r,s}(x,y) = d_{12} (x + y) + d_{34} (x^2 + y^2) + d_5 xy + d_{67} (x^2 y + y^2 x) + d_8 x^2 y^2
$$
is birationally equivalent to the Edwards curve for certain values of $d_{12}, d_{34}, d_5, d_{67}, d_8 \in \mathbb{K}$. This equivalence allows us to gain insight into the tropicalization of $f_{r,s}$ through a theta function parametrization, akin to the approach employed by Kajiwara-Kaneko-Nobe-Tsuda \cite{kajiwara2009}. Let $u_{k} = \val(d_{k})$ for $k \in \{12, 34, 5, 67, 8\}$. The tropicalization of $f_{r,s}$ is the tropical polynomial
\begin{align*}
\trop(f_{r,s})(X,Y) = \min( & u_{12} + X, u_{12} + Y, u_{34} + 2X, u_{34} + 2Y, u_5 + X + Y, \\
& u_{67} + 2X + Y, u_{67} + 2Y + X, u_8 + 2X + 2Y).
\end{align*}
The coefficients $d_{k}$ are parametrized by two parameters $r,s \in \mathbb{K}$ in the following way.
\begin{align}\label{coef_d_ij}
\begin{split}
d_{12}&=
2\al\bal(\al^4-\bal^4)(\bal s-\al r), \\
d_{34}&= 
(\al^4-\bal^4)(\bal^2s^2-\al^2r^2), \\
d_5&= 
8\al\bal(\al r-\bal s)(\bal^3 r-\al^3 s), \\
d_{67} &= 
2(\al r-\bal s)
\{ (\bal^4-\al^4)rs +2\al\bal (\bal^2r^2-\al^2s^2)\}, \\
d_8 &=
2 (\al^2 s^2-\bal^2 r^2)(\bal^2 s^2- \al^2 r^2).
\end{split}
\end{align}
\begin{figure}[!ht]
\begin{subfigure}[b]{3cm}
\centering
\resizebox{3cm}{!}{
\begin{tikzpicture}
\node [pointnode] (1) at (0,0) {};
\draw (1) -- ++(2,2);
\draw (1) -- ++(2,0);
\draw (1) -- ++(0,2);
\draw (1) -- ++(-2,0);
\draw (1) -- ++(0,-2);
\end{tikzpicture}
}
\caption{}
\label{fig:non-smth-no-edge}
\end{subfigure}
\begin{subfigure}[b]{3cm}
\centering
\resizebox{3cm}{!}{
\begin{tikzpicture}
\node [pointnode] (1) at (-0.5,-0.5) {};
\node [pointnode] (2) at (1,1) {};
\draw (1) -- (2);
\draw (1) -- ++(-1.5,0);
\draw (1) -- ++(0,-1.5);
\draw (2) -- ++(1.5,0);
\draw (2) -- ++(0,1.5);
\draw (2) -- ++(1.5,1.5);
\draw (2) -- ++(-3,0);
\draw (2) -- ++(0,-3);
\end{tikzpicture}
}
\caption{}\label{fig:non-smth-one-edge}
\end{subfigure}
\begin{subfigure}[b]{3cm}
\centering
\resizebox{3cm}{!}{
\begin{tikzpicture}
\node [pointnode] (1) at (0,0) {};
\node [pointnode] (2) at (2,0) {};
\node [pointnode] (3) at (2,2) {};
\node [pointnode] (4) at (0,2) {};
\draw (1) -- (2) -- (3) -- (4) -- (1);
\draw (1) -- ++(-1.5,0);
\draw (1) -- ++(0,-1.5);
\draw (2) -- ++(1.5,0);
\draw (2) -- ++(0,-1.5);
\draw (3) -- ++(1.5,1.5);
\draw (4) -- ++(-1.5,0);
\draw (4) -- ++(0,1.5);
\end{tikzpicture}
}
\caption{}
\label{fig:non-smth-square-only}
\end{subfigure}
\begin{subfigure}[b]{3cm}
\centering
\resizebox{3cm}{!}{
\begin{tikzpicture}
\node [pointnode] (1) at (0,0) {};
\node [pointnode] (2) at (1.5,0) {};
\node [pointnode] (3) at (1.5,1.5) {};
\node [pointnode] (4) at (0,1.5) {};
\node [pointnode] (5) at (-0.5,-0.5) {};
\draw (1) -- (2) -- (3) -- (4) -- (1);
\draw (1) -- (5);
\draw (5) -- ++(-1.5,0);
\draw (5) -- ++(0,-1.5);
\draw (2) -- ++(1,0);
\draw (2) -- ++(0,-2);
\draw (3) -- ++(1,1);
\draw (4) -- ++(-2,0);
\draw (4) -- ++(0,1);
\end{tikzpicture}
}
\caption{}
\label{fig:non-smth-square-one-edge}
\end{subfigure}
\begin{subfigure}[b]{3cm}
\centering
\resizebox{3cm}{!}{
\begin{tikzpicture}
\node [pointnode] (1) at (3,1) {};
\node [pointnode] (2) at (3,-1) {};
\node [pointnode] (3) at (2,-2) {};
\node [pointnode] (4) at (0,0) {};
\node [pointnode] (5) at (1,1) {};
\draw (1) -- (2) -- (3) -- (4) -- (5) -- (1);
\draw (1) -- ++(1.5,1.5);
\draw (2) -- ++(1.5,0);
\draw (3) -- ++(0,-1.5);
\draw (4) -- ++(-1.5,0);
\draw (5) -- ++(0,1.5);
\end{tikzpicture}
}
\caption{}
\label{fig:non-smth-trivalent-pntgn}
\end{subfigure}
\begin{subfigure}[b]{3cm}
\centering
\resizebox{3cm}{!}{
\begin{tikzpicture}
\node [pointnode] (1) at (0,0) {};
\node [pointnode] (2) at (1,0) {};
\node [pointnode] (3) at (1,1) {};
\node [pointnode] (4) at (0,1) {};
\node [pointnode] (5) at (-0.5,-0.5) {};
\node [pointnode] (6) at (1.5,-0.5) {};
\node [pointnode] (7) at (-0.5,1.5) {};
\draw (1) -- (2) -- (3) -- (4) -- (1);
\draw (1) -- (5);
\draw (2) -- (6);
\draw (4) -- (7);
\draw (5) -- ++(-1,0);
\draw (5) -- ++(0,-1);
\draw (6) -- ++(0.5,0);
\draw (6) -- ++(0,-1);
\draw (3) -- ++(1,1);
\draw (7) -- ++(-1,0);
\draw (7) -- ++(0,0.5);
\end{tikzpicture}
}
\caption{}
\label{fig:smth-square}
\end{subfigure}
\begin{subfigure}[b]{3cm}
\centering
\resizebox{3cm}{!}{
\begin{tikzpicture}
\node [pointnode] (1) at (0,0) {};
\node [pointnode] (2) at (0,-2) {};
\node [pointnode] (3) at (-1,-3) {};
\node [pointnode] (4) at (-2,-3) {};
\node [pointnode] (5) at (-3,-2) {};
\node [pointnode] (6) at (-3,-1) {};
\node [pointnode] (7) at (-2,0) {};
\draw (1) -- (2) -- (3) -- (4) -- (5) -- (6) -- (7) -- (1);
\draw (1) -- ++(1,1);
\draw (2) -- ++(1,0);
\draw (3) -- ++(0,-1);
\draw (4) -- ++(0,-1);
\draw (5) -- ++(-1,0);
\draw (6) -- ++(-1,0);
\draw (7) -- ++(0,1);
\end{tikzpicture}
}
\caption{}
\label{fig:smth-heptagon}
\end{subfigure}
\caption{All possible tropical curves of $\trop(f_{r,s})$.}
\label{fig-f-rs}
\end{figure}
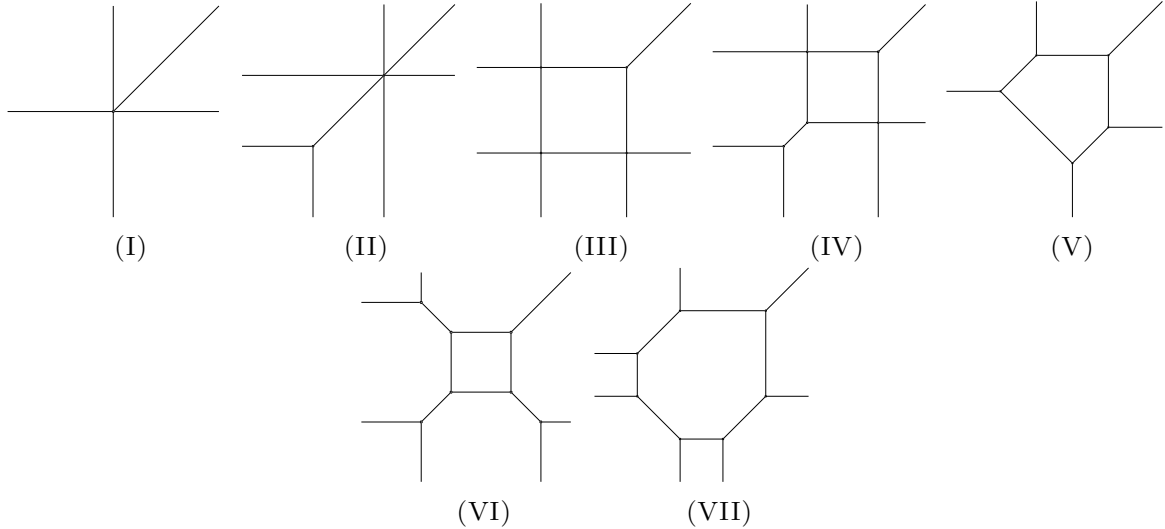
The values of $u_{k} = \val(d_k)$ $(k \in \{12, 34, 5, 67, 8\})$ are contingent not only upon the valuations of $r$ and $s$ but also on their individual coefficients in $q$. The unimodular transformation discussed in $\S$\ref{sect-unimod} allows us to investigate $C(\trop(f_{r,s}))$ in our framework using the truncated symmetric
\begin{align*}
h_{r,s} (x,y) \coloneqq & x^2 y^2 f_{r,s} (x^{-1}, y^{-1}) \\
= & d_{12} (xy^2 + x^2y) + d_{34} (x^2 + y^2) + d_5 xy + d_{67} (x + y) + d_8
\end{align*}
and hence to apply Theorem \ref{thm-smooth} and Theorem \ref{thm-non-smooth}. It turns out that the shapes of 
\begin{align}
C(\trop(f_{r,s})) = -1 \cdot C(\trop(h_{r,s}))
\end{align}
are classified into forms listed in Figure \ref{fig-f-rs}. In particular, the cases \subref{fig:non-smth-square-only}, \subref{fig:non-smth-square-one-edge}, \subref{fig:non-smth-trivalent-pntgn}, \subref{fig:smth-square}, \subref{fig:smth-heptagon} having nontrivial cycles correspond to Thoerem \ref{thm-non-smooth} (f), (g), (h) and Theorem \ref{thm-smooth} (b) (e), respectively. We refer the readers to \protect{\cite[\S 5]{nak23}} for our subsequent discussions.

\end{document}